\newtheorem{theorem}{Theorem}[section]
\newtheorem{lemma}[theorem]{Lemma}
\newtheorem{proposition}[theorem]{Proposition}
\newtheorem{corollary}[theorem]{Corollary}
\theoremstyle{definition}
\newtheorem{definition}[theorem]{Definition}
\newtheorem{remark}[theorem]{Remark}
\newcommand{\nc}{\newcommand}
\nc{\A}{\mathcal{A}}
\nc{\Null}{\mathbf{0}}
\nc{\Eins}{\mathbf{1}}
\nc{\Nps}{\mathrm{M}^N_\infty}
\nc{\DMO}{\DeclareMathOperator}
\DMO{\acl}{acl}
\DMO{\cl}{cl}
\DMO{\IND}{\;ind\;}
\DMO{\gate}{gate}
\DMO{\etag}{etag}
\DMO{\cont}{cont}
\DMO{\boundary}{bd}
\DMO{\tp}{tp}
\DMO{\ab}{a}
\DMO{\ba}{b}
\def\Ind#1#2{#1\setbox0=\hbox{$#1x$}\kern\wd0\hbox to 0pt{\hss$#1\mid$\hss}
\lower.9\ht0\hbox to 0pt{\hss$#1\smile$\hss}\kern\wd0}
\def\Notind#1#2{#1\setbox0=\hbox{$#1x$}\kern\wd0\hbox to
0pt{\mathchardef\nn="0236\hss$#1\nn$\kern1.4\wd0\hss}\hbox
to 0pt{\hss$#1\mid$\hss}\lower.9\ht0
\hbox to 0pt{\hss$#1\smile$\hss}\kern\wd0}
\begin{document}

\title{An alternative axiomization of $N$-pseudospaces}
\date{1 May 2017}

\author{Katrin Tent and Martin Ziegler}
\maketitle

\begin{abstract}
  We give a new axiomatization of the $N$-pseudospace, studied in
  \cite{kT14} and \cite{BMPZ13}, based on the zigzags introduced in
  \cite{kT14}. We also present a more detailed account of the
  characterization of forking given in \cite{kT14}.
\end{abstract}


\section{Introduction}

Pseudospaces were studied as buildings in \cite{BMPZ13} and in
\cite{kT14} from different points of view, the first one focussing on
the Weyl group, the latter one on a graph theretic approach.

The crucial tool in \cite{kT14} for describing algebraic closure and
forking in these geometries were equivalence classes of so-called
\emph{reduced paths} between vertices where two paths are equivalent
if they change direction (in the sense of the partial order underlying
the pseudospace) in the same vertices.

We now take a more abstract approach. We consider the equivalence
classes of reduced paths as \emph{zigzags} in a lattice and give a new
axiomatization of the pseudospaces in this terminology. Some of the
results and their proofs become somewhat easier in this context. In
particular, we give a more detailed account of the forking
characterization given in \cite{kT14} and prove some more properties
which make it easier to work with.


\section{Simply connected lattices}
Let $V$ be a lattice.\\

\begin{definition}
  An \emph{alternating sequence} of length $n$ is a sequence
  $x_0\ldots x_n$ of elements of $V$ such that $x_i\leq x_{i+1}$ or
  $x_i\geq x_{i+1}$ alternatingly. We will write such a sequence in
  the form $a_0,b_0.a_1,b_1\ldots$ or $b_0,a_1,b_1\ldots$,
  respectively, if $a_i\leq b_i$ and $b_i\geq a_{i+1}$. The $a_i$ are
  the \emph{sinks}, the $b_i$ the \emph{peaks} of the sequence.

   A \emph{zigzag} is an alternating sequence where
   $a_i=\inf(b_{i-1},b_i)$ and\linebreak $b_i=\sup(a_i,a_{i+1})$ and
   furthermore $a_i\not=b_i$ and $b_i\not=a_{i+1}$, for all $i$ for
   which this makes sense. An alternating sequence $x_0\ldots x_n$ of
   length $\geq 2$ is a \emph{weak zigzag}, if for all $i$ the element
   $x_i$ is incomparable with $x_{i+2}$ and with $x_{i+3}$.
\end{definition}

Zigzags were introduced by the first author in \cite{kT14}.

\begin{remark}\label{R:cycle}
 It is easy to see that every zigzag of length $\geq 2$ is also a weak
 zigzag.
 $x_{i+4}$ are not comparable, since $a_j\leq a_{j+2}$, for example,
 would imply $a_j\leq b_{j+1}$.
\end{remark}

An alternating sequence $x'_0\ldots x'_n$ is a \emph{refinement} of
$x_0\ldots x_n$ is $x'_0=x_0$, $x'_n=x_n$, $a_i\leq a'_i$ and
$b'_i\leq b_i$. Clearly, a sequence which satisfies
$a_i=\inf(b_{i-1},b_i)$ and $b_i=\sup(a_i,a_{i+1})$ has no proper
refinement.\\

\noindent\begin{center}
\begin{picture}(60,30)(-30,-15)
  \put(-10,15){$b_{i-1}$}
  \put(-9,14){\line(0,-1){6}}
  \put(10,15){$b_i$}
  \put(11,14){\line(0,-1){6}}

  \put(-10,5){$b'_{i-1}$}
  \put(10,5){$b'_i$}

  \put(-20,-5){$a'_{i-1}$}
  \put(-21,-2){\line(-1,1){6}}
  \put(-17,-2){\line(1,1){6}}
  \put(0,-5){$a'_i$}
  \put(-1,-2){\line(-1,1){6}}
  \put(3,-2){\line(1,1){6}}
  \put(20,-5){$a'_{i+1}$}
  \put(19,-2){\line(-1,1){6}}
  \put(23,-2){\line(1,1){6}}

  \put(-20,-15){$a_{i-1}$}
  \put(-19,-12){\line(0,1){6}}
  \put(0,-15){$a_i$}
  \put(1,-12){\line(0,1){6}}
  \put(20,-15){$a_{i+1}$}
  \put(21,-12){\line(0,1){6}}
\end{picture}

\end{center}

\begin{lemma}
  Let $x'_0\ldots x'_n$ be a refinement of $x_0 \ldots x_n$. Then the
  following holds:
  \begin{enumerate}
  \item If $x_0\ldots x_n$ is a weak zigzag, then also $x'_0\ldots
    x'_n$ is a weak zigzag.
  \item For every $i$, if $a_i=\inf(b_{i-1},b_i)$ then also
    $a'_i=\inf(b'_{i-1},b'_i)$ and, dually, if
    $b_i=\sup(a_i,a_{i+1})$, then also $b'_i=\sup(a'_i,a'_{i+1})$.
  \end{enumerate}
\end{lemma}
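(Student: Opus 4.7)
The plan is to handle (2) first, as it is a short consequence of monotonicity of $\inf$, and then to attack (1) by contradiction, showing that any forbidden comparability in the refinement propagates back to a forbidden one in the original sequence.

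For (2), assuming $a_i=\inf(b_{i-1},b_i)$, the alternating relations applied to the refinement immediately give $a'_i\le b'_{i-1}$ and $a'_i\le b'_i$, whence $a'_i\le\inf(b'_{i-1},b'_i)$. Conversely, the refinement bounds $b'_{i-1}\le b_{i-1}$ and $b'_i\le b_i$ yield $\inf(b'_{i-1},b'_i)\le\inf(b_{i-1},b_i)=a_i\le a'_i$, and equality follows. The dual statement about $b'_i=\sup(a'_i,a'_{i+1})$ is handled by reversing all inequalities.

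For (1), I will show that any comparability between $x'_i$ and $x'_{i+2}$, or between $x'_i$ and $x'_{i+3}$, pushes through a short chain of inequalities---using only $a_k\le a'_k$, $b'_k\le b_k$ and the alternating relations---to a comparability between two elements of the original sequence at distance $2$ or $3$, contradicting the weak-zigzag hypothesis. A representative case: if $a'_k\le a'_{k+1}$ (two refined sinks at distance $2$), then $a_k\le a'_k\le a'_{k+1}\le b'_{k+1}\le b_{k+1}$, forcing $a_k\le b_{k+1}$, a forbidden distance-$3$ comparison in the original. The reverse orientation, and the peak--peak and mixed sink--peak cases at distance $2$ or $3$, are handled by chains of the same shape, padded on the bottom with an $a\le a'$ step and on the top with a $b'\le b$ step.

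The main nuisance, rather than a real obstacle, will be the boundary cases: if the propagated chain requires an element $b_{k\pm1}$ or $a_{k\pm1}$ lying past the ends of the sequence, the refinement condition $x'_0=x_0$, $x'_n=x_n$ forces the corresponding endpoint of the refinement to coincide with its original, which collapses one link of the chain and produces a forbidden distance-$2$ comparison in the original instead. Once the propagation principle is isolated, what remains is routine bookkeeping over the handful of cases.
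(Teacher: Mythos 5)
Your proof is correct, and since the paper's own proof of this lemma is literally the single word ``Easy,'' you have in effect supplied the routine verification the authors leave to the reader; both parts go through exactly as you describe (the $\inf/\sup$ sandwich for (2), and for (1) propagating any comparability $x'_i\lessgtr x'_{i+2}$ or $x'_i\lessgtr x'_{i+3}$ through a chain of the form $a_k\le a'_k,\ b'_k\le b_k$ plus the alternating inequalities to a forbidden distance-$2$ or distance-$3$ comparison among the $x_i$, with $x'_0=x_0$, $x'_n=x_n$ absorbing the boundary cases). One small slip in your narration: there are no ``mixed sink--peak cases at distance $2$'' --- in an alternating sequence, distance $2$ is always sink--sink or peak--peak and distance $3$ is always sink--peak --- but this misstatement of the case taxonomy does not affect the argument, whose chains all land on genuine distance-$2$ or distance-$3$ violations in the original.
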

\begin{proof}
  Easy.
\end{proof}

Hence we obtain:

\begin{corollary}\label{C:weak_zigzag}
  Every weak zigzag can be refined to a zigzag.
\end{corollary}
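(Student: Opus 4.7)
The plan is to refine the weak zigzag $x_0\ldots x_n$ in two successive passes, each one pushing the sinks up (respectively, the peaks down) as far as the lattice allows, and to invoke the preceding lemma to track which structure survives. Part~(1) of that lemma will keep the weak zigzag property after each pass, and part~(2) will guarantee that the equality achieved in the first pass is not destroyed by the second.

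In the first pass I replace every interior sink $a_i$ by $a'_i:=\inf(b_{i-1},b_i)$, leaving peaks and boundary sinks unchanged. The inequalities $a_i\leq a'_i\leq b_{i-1},b_i$ show this is a legitimate refinement, and the resulting sequence is again a weak zigzag by part~(1) of the lemma. In the second pass I replace every interior peak $b_i$ by $b''_i:=\sup(a'_i,a'_{i+1})$; this is again a refinement for the dual reason. Applying part~(2) of the lemma to this second refinement of the intermediate sequence (which satisfies $a'_i=\inf(b_{i-1},b_i)$ by construction) yields $a'_i=\inf(b''_{i-1},b''_i)$, while $b''_i=\sup(a'_i,a'_{i+1})$ holds by definition. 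So after the two passes both lattice identities required by the definition of a zigzag hold at every interior position.

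What remains is to check the strict inequalities $a'_i\neq b''_i$ and $b''_i\neq a'_{i+1}$. For interior indices these are free: the final sequence is still a weak zigzag, so adjacent peaks $b''_{i-1},b''_i$ are incomparable and their infimum is strictly below each, and similarly adjacent sinks $a'_i,a'_{i+1}$ force their supremum strictly above each. At the endpoints the refined values coincide with the originals, and one argues directly from the weak zigzag property; for instance $a_0=b_0$ combined with $a_1\leq b_0$ would give $a_1\leq a_0$, contradicting the required incomparability of $x_0$ and $x_2$, and the three other boundary cases are symmetric.

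I do not anticipate any deep obstacle here; the main thing to be careful about is bookkeeping, namely distinguishing values at the three stages (original, after the first pass, after the second pass) when applying the two clauses of the preceding lemma, and separating the interior indices (where both identities and strict inequalities follow formally from the lemma and from incomparability of adjacent peaks/sinks) from the boundary indices (where the strict inequalities have to be extracted by hand from the weak zigzag condition).
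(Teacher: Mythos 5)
Your proposal is correct and follows essentially the same route as the paper: define $a'_i=\inf(b_{i-1},b_i)$, then $b'_i=\sup(a'_i,a'_{i+1})$, use part (1) of the preceding lemma to preserve the weak zigzag property and part (2) to preserve the infimum identity through the second pass, and observe that a weak zigzag satisfying both lattice identities is automatically a zigzag. The paper's proof is just a more compressed version of the same two-pass argument, leaving the non-degeneracy checks at the boundary implicit in the final observation rather than spelling them out as you do.
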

\begin{proof}
  Set $a'_i=\inf(b_{i-1},b_i)$ and $b'_i=\sup(a'_i,a'_{i+1})$. Note
  that a weak zigzag which satisfies $a_i=\inf(b_{i-1},b_i)$ and
  $b_i=\sup(a_i,a_{i+1})$ is a zigzag.
\end{proof}

\begin{lemma}\label{L:davorsetzen}
  Let $a_0,b_0,x_2 \ldots x_n$ be a zigzag and $c$ an element with
  $c\geq a_0$ and $c\not\geq b_0$. Then,
  \begin{enumerate}
  \item if $c < b_0$, the sequence $c,b_0,x_2\ldots x_n$ is a zigzag,
  \item or, if $c$ and $b_0$ are incomparable, the sequence
    $c,a_0,b_0,x_2\ldots x_n$ is a weak zigzag.
  \end{enumerate}
\end{lemma}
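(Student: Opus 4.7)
The plan is to verify directly the defining conditions of zigzag in case (1) and of weak zigzag in case (2), exploiting the fact that all conditions at indices $\geq 1$ of the new sequence agree with those of the original zigzag, since only the initial segment is altered.

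In part (1), the only substitution is that $a_0$ is replaced by $c$. Thus the single new condition to check is the peak identity $b_0=\sup(c,a_1)$, writing $a_1=x_2$. I would deduce it by a sandwich argument: $a_0\leq c$ together with $\sup(a_0,a_1)=b_0$ gives $\sup(c,a_1)\geq b_0$, while $c<b_0$ together with $a_1\leq b_0$ gives $\sup(c,a_1)\leq b_0$. The non-equality $c\neq b_0$ comes from the strictness of $c<b_0$, and the remaining non-equalities, as well as the $\inf$/$\sup$-conditions at all later positions, are inherited from the original zigzag.

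In part (2), I would first note that $c,a_0,b_0,x_2,\ldots$ is alternating, this time beginning with a peak. By Remark~\ref{R:cycle} the original zigzag is a weak zigzag, so at every index $i\geq 1$ the incomparabilities required of the new sequence are inherited verbatim. The only fresh incomparabilities to establish are that $c$ is incomparable with $b_0$ (immediate from the hypothesis) and, when $a_1=x_2$ exists, that $c$ is incomparable with $a_1$. For the latter: if $c\geq a_1$, then combined with $c\geq a_0$ this forces $c\geq\sup(a_0,a_1)=b_0$, contradicting $c\not\geq b_0$; and if $c\leq a_1$, then $c\leq a_1\leq b_0$ contradicts $c\not\leq b_0$.

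The main point to appreciate, rather than an actual obstacle, is that in case (2) the new sequence generally fails to be a zigzag: there is no reason for $\inf(c,b_0)$ to equal $a_0$, so only the weaker conclusion is available. This is precisely the context in which Corollary~\ref{C:weak_zigzag} will later be useful for refining the resulting weak zigzag back to a genuine zigzag.
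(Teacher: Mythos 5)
Your proposal is correct and fills in exactly the routine verification that the paper leaves to the reader (the paper's proof is the single line ``This is easy to check''). The argument is the natural one: in case (1) only the peak identity $b_0=\sup(c,x_2)$ and the nondegeneracy $c\neq b_0$ need checking, and your sandwich argument and monotonicity of $\sup$ handle them; in case (2) the two new incomparabilities, $c$ with $b_0$ and $c$ with $x_2$, are established precisely as they must be, with the remaining conditions inherited from the original zigzag via Remark~\ref{R:cycle} (and you correctly note the degenerate case where $x_2$ does not exist). Your closing observation about why only a weak zigzag is obtained in case (2) matches the parenthetical remark following the lemma in the paper.
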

\begin{proof}
  This is easy to check.
\end{proof}
\noindent Note that in case 2 $c,a_0,b_0,x_2\ldots x_n$ refines to the
zigzag $c,\inf(c,b_0),b_0,x_2\ldots x_n$.

\begin{definition}
  A (weak) \emph{zigzag cycle} is a closed (weak) zigzag of length
  $2n$ which satisfies the zigzag condition (considering indices
  modulo $2n$). Thus a zigzag cycle is of the form $a_0,b_0\ldots
  b_{n-1},a_n=a_0$ where $a_0=a_n=\inf(b_{n-1},b_0)$, or of the form
  $b_0,a_1\ldots a_n,b_n$, where $b_0=b_n=\sup(a_n,a_1)$.

\end{definition}
By Remark~\ref{R:cycle}, a weak zigzag cycle has length at least
$6$:\\

\noindent\begin{centering}
\begin{picture}(60,30)(-30,-15)
  \put(-15,10){$b_2$}
  \put(0,10){$b_0$}
  \put(15,10){$b_1$}

  \put(-30,-10){$a_0$}
  \put(-28.5,-8){\line(3,4){13}}
  \put(-28,-8){\line(3,2){27}}

  \put(0,-10){$a_2$}
  \put(0,-8){\line(-3,4){12.7}}
  \put(1.5,-8){\line(3,4){13}}

  \put(30,-10){$a_1$}
  \put(30,-8){\line(-3,4){12.7}}
  \put(29,-8){\line(-3,2){26}}

\end{picture}

\end{centering}

Corollary \ref{C:weak_zigzag} has an obvious cycle-version: every weak
zigzag cycle can be refined to a zigzag cycle, possibly starting and
ending in a new point.

\begin{definition}
  A lattice $V$ is \emph{simply connected} if there are no zigzag
  cycles.
\end{definition}

\begin{proposition}\label{P:simply_con}
  For a lattice $V$ the following are equivalent:
  \begin{enumerate}[a)]
  \item\label{P:simply_con:sc} $V$ is simply connected.
  \item\label{P:simply_con:comp} The endpoints of a zigzag of length
    at least $2$ are incomparable.
  \item\label{P:simply_con:infsup} For every zigzag $x_0\ldots x_m$ we
    have
    \[\inf(x_0,x_m)\leq x_i\leq\sup(x_0,x_m),\]
  \end{enumerate}
\end{proposition}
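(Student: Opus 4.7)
The plan is to prove the cycle of implications $(c) \Rightarrow (b) \Rightarrow (a) \Rightarrow (c)$, the last being the main work.

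For $(c) \Rightarrow (b)$: if the endpoints of a zigzag of length $m \geq 2$ are comparable, say $x_0 \leq x_m$, then $(c)$ at $i = 2$ gives $x_0 \leq x_2$, and this immediately contradicts a defining relation of a zigzag, since $b_0 = \sup(x_0, x_2)$ then collapses to $x_2$ (if $x_0$ is a sink), violating $b_0 \neq a_1$; dually if $x_0$ is a peak. The case $x_0 \geq x_m$ is symmetric (use $i = m-2$). For $(b) \Rightarrow (a)$: direct inspection forbids zigzag cycles of lengths $2$ or $4$ (they force equalities such as $b_0 = a_0$), so any zigzag cycle has length $2n \geq 6$; deleting one vertex produces a zigzag of length $\geq 5$ with comparable (cycle-adjacent) endpoints, contradicting $(b)$.

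For $(a) \Rightarrow (c)$ I would induct on $m$. The base cases $m \leq 3$ follow from the zigzag relations together with monotonicity of $\inf$ and $\sup$. The odd-length case $m = 2k+1$ reduces immediately to the even case by applying the inductive hypothesis to the two even-length sub-zigzags $x_0, \ldots, x_{m-1}$ and $x_1, \ldots, x_m$, using the comparisons $\sup(x_0, x_{m-1}) \leq \sup(x_0, x_m)$ and $\inf(x_1, x_m) \geq \inf(x_0, x_m)$. So assume $m = 2k \geq 4$ with sink endpoints $a_0, a_k$ (peak endpoints are dual) and, towards a contradiction, that some interior sink $a_j$ satisfies $a_j \not\leq s := \sup(a_0, a_k)$. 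Inductive $(b)$ on the sub-zigzag $a_0, \ldots, a_j$ rules out $a_j > s$, so $a_j \parallel s$. Now choose the largest $\ell$ and smallest $r$ such that $s \geq a_i$ for all $i \leq \ell$ and all $i \geq r$; then $\ell < j < r$, and by construction $s \not\geq b_\ell$ and $s \not\geq b_{r-1}$. Form the closed alternating sequence $s, a_\ell, b_\ell, a_{\ell+1}, \ldots, b_{r-1}, a_r, s$. Each of the $2$- and $3$-apart incomparabilities needed for a weak zigzag cycle then follows: the pairs internal to the sub-zigzag $a_\ell, \ldots, a_r$ by inductive $(b)$, and the pairs involving $s$ by combining the choice of $\ell, r$ (ruling out $s \geq b_\ell$ and $s \geq a_{\ell+1}$, and similarly on the right) with a second appeal to inductive $(b)$ on a shorter sub-zigzag (ruling out $s \leq b_\ell$ and $s \leq a_{\ell+1}$, since each would produce comparable endpoints in such a sub-zigzag). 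The cycle version of Corollary~\ref{C:weak_zigzag} then refines this weak zigzag cycle to a zigzag cycle, contradicting $(a)$.

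\textbf{Main obstacle.} Everything rests on the inductive step of $(a) \Rightarrow (c)$. The difficulty is that the naive closure $s, a_0, b_0, \ldots, a_k, s$ need not itself be a weak zigzag cycle, since $s$ may dominate some interior peak $b_i$; the trimming to $\ell, r$, which leverages the defect position $a_j$ to guarantee a nondegenerate middle containing $a_j$, is what isolates a genuine weak zigzag cycle and lets the refinement go through. A clean reduction of the odd-length case to the even-length case, via the two overlapping sub-zigzags, is what keeps the induction well-founded.
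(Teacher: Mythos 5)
Your $(c)\Rightarrow(b)$ is fine (you use the pair $x_0,x_2$; the paper uses $x_{m-1},x_m$). In $(b)\Rightarrow(a)$ the phrase ``deleting one vertex'' is slightly misleading: literally dropping the repeated vertex $a_0=a_n$ leaves the path $b_0,\dots,b_{n-1}$ of length $2n-2$, whose endpoints $b_0,b_{n-1}$ are the two peaks with $a_0=\inf(b_{n-1},b_0)$ and hence need not be comparable. What works (and is presumably what you mean) is to drop one of the two \emph{occurrences} of the repeated vertex, i.e.\ read the cycle as the path $a_0,b_0,\dots,b_{n-1}$ of length $2n-1\geq 5$; then the endpoint $b_{n-1}$ satisfies $a_0=\inf(b_{n-1},b_0)\leq b_{n-1}$, so the endpoints are comparable and $(b)$ fails. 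The paper takes the even simpler route of keeping both occurrences: $a_0,b_0,\dots,b_{n-1},a_0$ is itself a zigzag (of length $2n\geq 6$) with \emph{equal} endpoints.

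Your $(a)\Rightarrow(c)$ is a genuinely different route from the paper's. The paper factors through $(b)$: it proves $(a)\Rightarrow(b)$ by taking a minimal-length zigzag with comparable endpoints, showing the length is odd and $\geq 5$, and then closing it into a weak zigzag cycle; and it proves $(b)\Rightarrow(c)$ by truncating and prepending a bound $c$ via Lemma~\ref{L:davorsetzen}. You instead prove $(c)$ directly by strong induction on the length, manufacturing a cycle around a hypothetical defect. The odd-to-even reduction via the two overlapping even sub-zigzags is clean and correct, and your $\ell,r$ trimming is exactly the right device; what you buy over the paper is avoiding Lemma~\ref{L:davorsetzen} entirely in this step, at the price of a somewhat longer case analysis inside the cycle.

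There is, however, a gap in that case analysis. In the cycle $s,a_\ell,b_\ell,\dots,b_{r-1},a_r,s$ the two vertices adjacent to $s$, namely $a_\ell$ and $a_r$, are $2$-apart (through $s$) and must be incomparable, but this pair is neither ``internal to the sub-zigzag'' in the weak-zigzag sense nor a ``pair involving $s$,'' and you do not address it. If you intended to get it from inductive $(b)$ applied to the sub-zigzag $a_\ell,\dots,a_r$, note that its length $2(r-\ell)$ equals $m$ precisely when $\ell=0$ and $r=k$, which does occur (already for $m=4$ a defect at $a_1$ forces $\ell=0$, $r=2$), and then the inductive hypothesis does not apply. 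The fix is short: if $a_\ell\leq a_r$ then $a_\ell\leq a_r\leq b_{r-1}$, so the sub-zigzag $a_\ell,\dots,b_{r-1}$ of odd length $2(r-\ell)-1<m$ has comparable endpoints, contradicting inductive $(b)$; dually, $a_\ell\geq a_r$ is killed by $b_\ell,\dots,a_r$. With that one extra line the argument closes.
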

\begin{proof}\mbox{}\par
\noindent\ref{P:simply_con:infsup})$\to$\ref{P:simply_con:comp}) If
$x_0\ldots x_m$ is a zigzag and $x_0\leq x_m$, we have $x_i\leq x_m$
for all $i$ by assumption. Hence $m<2$ by the zigzag condition.\\

\noindent\ref{P:simply_con:comp})$\to$\ref{P:simply_con:sc}) A zigzag
cycle would have length $\geq 6$ and start and end in the same
point.\\

\noindent\ref{P:simply_con:sc})$\to$\ref{P:simply_con:comp}) Assume
that $x_0\ldots x_m$ is a zigzag with $x_0\leq x_m$ of minimal length
$m\geq 2$. We have observed above\footnote{Remark \ref{R:cycle}} that
$m\geq 5$. We know also that $m$ is odd, since, for example, $a_0\leq
a_{n+1}$ implies $a_0\leq b_n$. So, without loss of generality, our
sequence has the form $a_0\ldots b_n$, for $n\geq 2$. By minimality
$a_0\geq b_n$ is ruled out, so we have $a_0\leq b_n$. By minimality
and $n\geq 2$, we have that $a_0$ is not comparable with $b_{n-1}$ and
$a_n$, $b_n$ is not comparable with $b_0$ and $a_1$, and $a_n$ is not
comparable with $b_0$. This means that $a_0\ldots b_n,a_0$ is a weak
zigzag cycle, which can be refined to a zigzag cycle. So $V$ is not
simply connected.\\

\noindent\ref{P:simply_con:comp})$\to$\ref{P:simply_con:infsup}) By
duality it is enough to show $x_i\leq\sup(x_0,x_m)$. Let $c$ be any
element which bounds $x_0$ and $x_m$. We have to show that $x_i\leq c$
for all $i$. If this is not the case, we may shrink the sequence and
may assume that $m\geq 2$ and $x_i\not\leq c$ for all
$i=1,\ldots,m-1$. This implies that the sequence has the form
$a_0,b_0\ldots b_{n-1},a_n$. Since $b_0=\sup(a_0,a_1)$, we have $n\geq
2$. Now $c\not\geq b_0$, so by Lemma \ref{L:davorsetzen} either
$c,b_0\ldots a_n$ is a zigzag or $c,a_0,b_0\ldots a_n$ is a weak
zigzag with comparable endpoints, which can be refined to a zigzag.
Both zigzags have comparable endpoints.
\end{proof}
\begin{proposition}\label{P:construction_sc}
  Let $U$ be a simply connected sublattice of $V$. Let
  $(v_\alpha\mid\alpha<\lambda)$ be a wellordering of $V\setminus U$
  such that all $V_\beta=U\cup\{v_\alpha\mid\alpha<\beta\}$ are
  sublattices of $V$. Then $V$ is simply connected.
\end{proposition}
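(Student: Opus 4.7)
The plan is transfinite induction on $\beta\le\lambda$ showing that each $V_\beta$ is simply connected; once $\beta=\lambda$ is reached, $V_\lambda=V$ and the proposition follows. The base case $\beta=0$ is the hypothesis that $U=V_0$ is simply connected. At a limit $\beta$, any zigzag cycle is a finite sequence of elements of $V_\beta=\bigcup_{\alpha<\beta}V_\alpha$ and therefore already lies in some $V_\alpha$ with $\alpha<\beta$, where the induction hypothesis excludes it.

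The content of the argument is the successor step. Assume inductively that $W:=V_\alpha$ is simply connected and set $v:=v_\alpha$; by hypothesis $W\cup\{v\}$ is a sublattice of $V$. Suppose towards a contradiction that $W\cup\{v\}$ contains a zigzag cycle. Since the cycle cannot lie entirely in $W$, the new element $v$ must appear somewhere in it; by the duality between sinks and peaks it suffices to treat the case where $v$ occurs as a sink, say $v=a_i$ with $a_i=\inf(b_{i-1},b_i)$. The strict inequalities built into the zigzag definition give $a_i<b_{i-1}$ and $a_i<b_i$, so $b_{i-1}\neq v\neq b_i$ and both adjacent peaks already lie in $W$. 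But $W$ is a sublattice of $V$, so $\inf(b_{i-1},b_i)$ belongs to $W$, contradicting $\inf(b_{i-1},b_i)=v\notin W$.

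I do not anticipate a serious obstacle: the sublattice hypothesis is doing essentially all the work, since any infimum or supremum of two old elements stays old, leaving no room for $v$ to materialise as a sink or peak in a cycle. The only point to watch is the strict-inequality clause of the zigzag definition, which is precisely what lets me conclude that the two peaks adjacent to $v$ must be distinct from $v$ and therefore lie in $W$; without it one would have to argue separately about the possibility that $v$ appears again in a neighbouring position.
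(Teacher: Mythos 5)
Your proof is correct and uses essentially the same idea as the paper: locate the cycle in a minimal $V_{\beta+1}$, note that the neighbours of $v_\beta$ must lie in $V_\beta$ by the strict-inequality and incomparability clauses of the zigzag definition, and then invoke the sublattice hypothesis to derive the contradiction. The paper phrases this as a direct minimality argument rather than a transfinite induction, but the content is the same.
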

\begin{proof}
  Let $x_0\ldots x_n=x_0$ be a zigzag cycle in $V$. The cycle cannot
  be contained in $U$, so is is contained in some minimal
  $V_{\beta+1}$. If $x_i=v_\beta$, we have that $x_{i-1}$ and
  $x_{i+1}$ are in $V_\beta$. But then $x_i=\inf( x_{i-1},x_{i+1})$
  (or $\sup(x_{i-1},x_{i+1})$) is also in $V_\beta$, a contradiction.
\end{proof}

\section{The $N$-pseudospace}

An $N$-geometry is a structure $(V,<,\A_{-1},\A_0,\ldots,\A_{N+1})$,
where $(V,<)$ is a partial order with smallest element $\Null$ and
largest element $\Eins$, the \emph{layers} $\A_s$ form a partition
of $V$ and
\begin{enumerate}
\item if $x\in\A_s$, $y\in\A_t$, then $x<y$ implies $s<t$
\item $\Null\in\A_{-1}$ and $\Eins\in\A_{N+1}$.
\end{enumerate}
\noindent It follows that $\Null$ and $\Eins$ are the only elements of
$\A_{-1}$ and $\A_{N+1}$, respectively.\\

Let $a<b$ be elements of the $N$-geometry $V$ and $s$ the index of a
layer between the layers of $a$ and $b$. Then there is a unique
extension $V(x)=V\cup\{x\}$ such that $x\in\A_s$ and the relations
between $x$ and elements $c\in V$ are implied by $a<x<b$, i.e.
\begin{align*}
  c<x&\,\Leftrightarrow\;c\leq a\\
  x<c&\,\Leftrightarrow\;b\leq c
\end{align*}
We call such an extension \emph{simple} and the triple $(a,b,s)$ its
\emph{type}.\\

The \emph{countable universal} $N$-pseudospace $\Nps$ is an
$N$-geometry which is obtained as the union of a countable sequence
$V_0\subset V_1\subset\dotsb$ of simple extensions $V_{i+1}=V_i(x_i)$,
starting from $V_0=\{\Null,\Eins\}$, such that every possible type
$(a, b, s)$ with $a, b\in \Nps$ is used infinitely often. It is easy
to see that the resulting structure does not depend on the actual
choice of the sequence.

\begin{theorem}\label{T:vollstaendig}
  The complete theory of $\Nps$ is axiomatised by
  \begin{enumerate}
  \item $M$ is an $N$-geometry
  \item $M$ is a simply connected lattice
  \item For all $a<b$ in $M$ and every $s$ between the layers of $a$
    and $b$, there are infinitely many $x\in\A_s$ which lie between
    $a$ and $b$.
  \end{enumerate}
\end{theorem}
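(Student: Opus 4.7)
The plan is to verify that $\Nps$ satisfies the three listed axioms, and then to show that any two countable models of the axioms are isomorphic via a back-and-forth argument; completeness of the theory then follows from Vaught's test, since every model is infinite by axiom~(3).

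For the first part, axioms~(1) and~(3) are immediate from the construction: each $V_{i+1}=V_i(x_i)$ is by definition a simple extension of an $N$-geometry, and every admissible simple-extension type is used infinitely often along the sequence. The lattice clause of axiom~(2) rests on the observation that in any simple extension $V(x)$ of type $(a,b,s)$ and any $c\in V$ incomparable with $x$, one computes $c\wedge x=c\wedge a$ and $c\vee x=c\vee b$ directly from the definition of simple extension. Hence, if $V$ is a lattice then so is $V(x)$, and $V$ embeds as a sublattice. Starting from $V_0=\{\Null,\Eins\}$ and iterating, each $V_i$ is a sublattice of $\Nps$, so $\Nps$ is itself a lattice; Proposition~\ref{P:construction_sc} then yields simple connectedness.

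For the second part, let $M_1, M_2$ be two countable models and build a back-and-forth through finite partial isomorphisms $f\colon A\to A'$ between finite sublattices $A\subseteq M_1$, $A'\subseteq M_2$ that preserve order and layer. Given a new element $v\in M_1\setminus A$ to accommodate, form the sublattice $B=\langle A\cup\{v\}\rangle$ in $M_1$. This is finite because meets and joins of incomparable elements in an $N$-geometry strictly drop or rise the layer, and there are only finitely many layers, so iterating the closure under $\wedge$ and $\vee$ terminates. Enumerate $B\setminus A=\{v_1,\dots,v_k\}$ in an order for which each $v_i$ presents itself as a simple extension of the previous sublattice $A_i:=A\cup\{v_1,\dots,v_{i-1}\}$, with type $(a_i^-,a_i^+,s_i)$ determined by the sup of its lower bounds and the inf of its upper bounds in $A_i$. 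Invoke axiom~(3) in $M_2$ iteratively to pick $v_i'\in M_2$ in layer $s_i$ strictly between $f(a_i^-)$ and $f(a_i^+)$.

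The main obstacle is to ensure that the realizations $v_i'$ actually extend $f$ to a partial isomorphism: one must rule out unwanted comparabilities $v_i'\leq f(c)$ or $v_i'\geq f(c)$ with $c\in f(A_i)$ for which the corresponding relation does not hold in $M_1$, and verify that the meets and joins of $v_i'$ with $f(A_i)$ match those prescribed by $B$. Here simple connectedness of $M_2$, in the form of Proposition~\ref{P:simply_con}(c), confines any such unwanted relation to a tight layer-bounded sandwich along a zigzag, and since axiom~(3) supplies infinitely many candidate $v_i'$ while only finitely many such constraints must be avoided, a suitable $v_i'$ can always be chosen. Running the back-and-forth to exhaustion yields the isomorphism $M_1\cong M_2$.
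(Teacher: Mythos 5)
Your verification that $\Nps$ satisfies the three axioms is essentially the paper's argument, using Lemma~\ref{L:simple_lattice} to propagate the lattice structure and Proposition~\ref{P:construction_sc} for simple connectedness; that part is fine.

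The completeness half, however, has a fundamental gap. You propose to show that any two \emph{countable} models are isomorphic and then invoke Vaught's test, but the theory of free $N$-pseudospaces is not $\omega$-categorical. For incomparable $x,y$ the closure $\cl(\{x,y\})$ contains every element on every zigzag joining $x$ to $y$, and such zigzags can have arbitrary length; this gives infinitely many $2$-types over $\emptyset$, so there are non-isomorphic countable models. The paper's proof sidesteps this by taking $M$ and $M'$ to be $\omega$-saturated, building a back-and-forth system that gives partial isomorphism (hence elementary equivalence), not isomorphism.

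Two further, intertwined problems in your back-and-forth. First, you work with finite \emph{sublattices} $A\subseteq M_1$, but the right objects are finite \emph{closed} subsets: sets closed under containing every zigzag between their elements. A sublattice need not be closed, and the type of a tuple over $A$ is not determined by the sublattice generated by $A$ and the tuple; it is determined by $\cl(A\cup\{v\})$. So $B=\langle A\cup\{v\}\rangle$ is the wrong target, and decomposing it into ``simple extensions'' in the abstract sense misses the zigzags connecting $v$ to $A$ inside $M_1$. The paper instead uses Proposition~\ref{P:endlicher_abschluss} to embed $v$ in a finitely constructible \emph{closed} extension, and Corollary~\ref{C:closed_type} is what guarantees isomorphisms between closed subsets are elementary.

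Second, the crucial ``pick a good $v_i'$'' step is where the real work lives, and your heuristic — finitely many constraints, infinitely many candidates — does not suffice. A single constraint such as ``$v_i'$ should not lie on a short zigzag to $f(A_i)$'' excludes infinitely many of the witnesses that axiom~(3) provides, and the axioms alone do not guarantee that a constraint-free witness exists in an arbitrary countable model. This is exactly what Proposition~\ref{P:existenz} establishes, and it requires a nontrivial inductive construction of auxiliary flags $a_0,b_0,a_1,b_1,\ldots$ to push the minimal direct-path length $\delta(x,A)$ arbitrarily high, followed by an appeal to $\omega$-saturation. Without that proposition your forth-step has no justification. I'd recommend replacing the countable-isomorphism strategy with the paper's: prove elementary equivalence of $\omega$-saturated models via back-and-forth over finite closed subsets, using Propositions~\ref{P:endlicher_abschluss} and~\ref{P:existenz} (packaged as Corollary~\ref{C:back}).
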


\noindent We call a model of this theory a \emph{free
  $N$-pseudospace}.\\

To show that $\Nps$ is a free $N$-pseudospace we need (the trivial
part of) the following lemma:

\begin{lemma}\label{L:simple_lattice}
  Let $V$ be an $N$-geometry which is a lattice, and $V'$ a proper
  extension of $V$ by a single element. Then $V'$ is a simple
  extension of $V$ if and only if $V'$ is a lattice and $V$ a
  sublattice of $V'$
\end{lemma}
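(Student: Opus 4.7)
The plan is to treat the two implications separately. For the easier direction, I would assume $V'=V\cup\{x\}$ is simple of type $(a,b,s)$ and verify the lattice properties directly from the defining equivalences $c<x\iff c\leq a$ and $x<c\iff c\geq b$. First I would check that $V$ is a sublattice: given $c,d\in V$, any upper bound of $\{c,d\}$ in $V'$ is either in $V$ or equals $x$, and in the latter case $c,d\leq a<x$ forces $\sup_V(c,d)\leq a<x$, so the $V$-supremum remains the supremum in $V'$; the argument for $\inf$ is symmetric. Then I would verify that $V'$ is a lattice by exhibiting $x\wedge c$ and $x\vee c$ for arbitrary $c\in V$: if $c\leq a$ or $c\geq b$ the values are immediate, and if $c$ is incomparable with $x$ a direct check gives $x\wedge c=\inf_V(a,c)$ and $x\vee c=\sup_V(b,c)$.

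For the converse, assume $V'$ is a lattice with $V$ a sublattice; the goal is to recover a type $(a,b,s)$. The natural candidates are $a=\max V_{<x}$, $b=\min V_{>x}$, and $s$ the layer of $x$, so the main obstacle is showing that these extremal elements actually exist. I would first observe that $V_{<x}$ contains $\Null$ and is closed under the $V$-join: since $V$ is a sublattice, $c,d\leq x$ implies $c\vee_{V'}d=c\vee_V d$, which is still $\leq x$ and, lying in $V$, cannot equal $x$. Hence $V_{<x}$ is upward-directed, and dually $V_{>x}$ is nonempty and downward-directed under $\wedge_V$. To extract extrema I would invoke the $N$-geometry layer structure: comparability strictly increases the layer index, so every chain in $V$ has at most $N+3$ elements, yielding both ACC and DCC. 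Thus $V_{<x}$ has a maximal element $a$ and $V_{>x}$ has a minimal element $b$, which by directedness are in fact the maximum and the minimum.

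With $a$ and $b$ in hand, I would verify the simple-extension equivalences: $c\leq a$ forces $c\leq a<x$ and hence $c<x$, while $c<x$ places $c$ in $V_{<x}$ and hence $c\leq a$; the dual argument handles $b$. Since $a<x<b$ and layers strictly increase along comparability, the layer $s$ of $x$ lies strictly between the layers of $a$ and $b$, so $(a,b,s)$ is a valid type and $V'$ coincides with its simple extension.
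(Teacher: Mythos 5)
Your proposal is correct and takes essentially the same approach as the paper: the crux of the nontrivial direction is that the $N$-geometry layer structure bounds chain length, forcing the existence of the extremal elements $a=\max V_{<x}$ and $b=\min V_{>x}$; the paper phrases this as ``an $N$-geometry which is a lattice is a complete lattice'' while you derive it from ACC/DCC together with directedness. Your explicit use of the sublattice hypothesis to show $V_{<x}$ is closed under $\vee_V$ (and hence that a maximal element is in fact the maximum) is exactly the detail the paper's one-line argument leaves implicit.
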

\begin{proof}
  That simple extensions have that property is easy to check. For the
  converse note first that an $N$-geometry which is a lattice is
  actually a complete lattice. So, if $V$ is a sublattice of the
  lattice $V'$, and $V'=V\cup\{x\}$, then there is a largest $a\in V$
  below $x$ and a smallest $b\in V$ above $x$. This shows that $V'$ is
  a simple extension.
\end{proof}

Now let us show that $\Nps$ is a model of our axioms.
$V_0=\{\Null,\Eins\}$ is a simply connected lattice. So by Lemma
\ref{L:simple_lattice}, $\Nps$ is a lattice and all $V_i$ used in the
construction of $\Nps$ are sublattices. It follows from Proposition
\ref{P:construction_sc} that $\Nps$ is simply connected. The last
axiom scheme holds, since all possible types are used infinitely
often: let $a<b$ in $V_i$ and $s$ between the layers of $a$ and $b$.
Then for infinitely many $j\geq i$ the type of the extension
$V_{j+1}=V_j(x_j)$ is $(a,b,s)$.\\

The proof of the completeness of the axioms needs some preparations.
Let us work in a free $N$-pseudospace $M$ .

\begin{definition}
  A subset $A$ of $M$ is \emph{closed} if it contains $\Null$ and
  $\Eins$ and is closed under connecting elements by zigzags: if
  $x_0\ldots x_n$ is a zigzag and $x_0,x_n$ are in $A$, then all $x_i$
  are in $A$.
\end{definition}

For elements $a\leq b$ of $M$ define\footnote{We will use $[b,a]$ and
  $(b,a)$ as well, if it is convenient.} the set $[a,b]$ to be $\{x\in
M\mid a\leq x\leq b\}$, and $(a,b)$ to be $\{x\in M\mid a<x<b\}$. It
follows from Proposition \ref{P:simply_con} that
$\{\Null,\Eins\}\cup[a,b]$ is closed.

If $x$ and $y$ are not comparable, then $x,\sup(x,y),y$ and
$x,\inf(x,y),y$ are zigzags. This shows that closed subsets are
sublattices of $M$.\\

\begin{definition}
  Let $A$ be closed and $x\in M$. A \emph{direct path from $x$ to $A$}
  is a zigzag $x=x_0\ldots x_n$ such that $x_i\not\in A$ for all
  $i<n$, $x_n\in A$, and $(x_{n-1},x_n)\cap A=\emptyset$.

  If there is a direct path from $x$ to $A$ of length $n\geq 2$, we
  denote by $\delta(x,A)$ the minimal such $n$. Otherwise
  $\delta(x,A)$ is undefined.
\end{definition}

\begin{remark}\label{r:direct}
  If $A$ is a closed subset of $M$ and $x\in M\setminus A$, for any
  zigzag $x=x_0,x_1\ldots x_m$ which connects $x$ to an element $x_m$
  of $A$ we obtain a direct path from $x$ to $A$ as follows: let $x_n$
  be the first element which belongs to $A$. By replacing $x_n$ if
  necessary, we may assume that no element of $A$ lies strictly
  between $x_{n-1}$ and $x_n$ yielding a direct path from $x$ to $A$.
\end{remark}

\begin{lemma}\label{L:einfachclosed}
  Let $A$ be closed and $x\in M\setminus A$. Then $A\cup\{x\}$ is
  closed if and only if there there is no direct path from $x$ to $A$
  of length $\geq 2$.
\end{lemma}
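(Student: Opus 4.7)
My plan is to prove both directions by contraposition, relying on Proposition~\ref{P:simply_con} to rule out zigzags with comparable endpoints and on Remark~\ref{r:direct} to extract a direct path from an arbitrary connecting zigzag.

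For the direction $(\Rightarrow)$ I would take any direct path $x=z_0,z_1,\ldots,z_n$ with $n\geq 2$ and read it as a zigzag whose endpoints $z_0=x$ and $z_n\in A$ both lie in $A\cup\{x\}$. Its interior vertex $z_1$ is outside $A$ by the definition of direct path, and outside $\{x\}$ because $z_1\ne z_0$ by the zigzag inequality $a_i\ne b_i$. Hence $A\cup\{x\}$ fails to be closed.

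The substantive direction is $(\Leftarrow)$. Here I would suppose $A\cup\{x\}$ is not closed and choose a witnessing zigzag $y_0,\ldots,y_m$ of minimal length with $y_0,y_m\in A\cup\{x\}$ and some interior $y_i\notin A\cup\{x\}$; minimality forces $m\geq 2$. By Proposition~\ref{P:simply_con} the endpoints are incomparable, hence distinct, so they cannot both equal $x$; and since $A$ itself is closed they cannot both lie in $A$. Up to the obvious symmetry I may therefore assume $y_0=x$ and $y_m\in A$. Let $k\geq 1$ be the least index with $y_k\in A$; if $k\geq 2$, Remark~\ref{r:direct} (which allows me to replace $y_k$ so that no element of $A$ lies strictly between $y_{k-1}$ and $y_k$) hands me a direct path from $x$ to $A$ of length $k\geq 2$.

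The main obstacle is the corner case $k=1$, when the very first step of the witnessing zigzag already lands in $A$. To dispose of it I would apply closure of $A$ to the sub-zigzag $y_1,\ldots,y_m$, whose endpoints both lie in $A$, to conclude $y_1,\ldots,y_m\in A$; together with $y_0=x$ this places every $y_i$ in $A\cup\{x\}$, contradicting the choice of the witnessing zigzag. Hence $k\geq 2$ and the required direct path exists.
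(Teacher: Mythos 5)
Your proof is correct and follows exactly the route the paper's one-line proof points at, supplying the details via Remark~\ref{r:direct} and Proposition~\ref{P:simply_con}. As a minor remark, taking the witnessing zigzag to be of minimal length already forces every interior vertex outside $A\cup\{x\}$ (otherwise one of the two sub-zigzags would be a shorter witness), so the corner case $k=1$ that you handle separately never actually arises.
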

\begin{proof}
  This is clear from the definition of closedness and the above
  construction of a direct path.
\end{proof}
\noindent Remember that if $A\cup\{x\}$ is closed, then it is a simple
extension of $A$ by Lemma~\ref{L:simple_lattice}. Note that, for
$x\not\in A$, there are exactly two direct paths of length~$1$
connecting $x$ to $A$, namely $x,\ab_{x,A}$ and $x,\ba_{x,A}$, where
\begin{align*}
  \ab_{x,A}&=\max\{y\in A\mid y\leq x\}\\
  \ba_{x,A}&=\,\min\{y\in A\mid x\leq y\}.
\end{align*}

\begin{lemma}\label{L:interval}
  A direct path from $x$ to a closed set $A$ of length $n\geq 2$ is
  contained in $(\ab_{x,A},\ba_{x,A})$.
\end{lemma}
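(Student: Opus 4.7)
The plan is to prove the containment by induction on $n$, using the preliminary claim: \emph{if $x_0$ is a sink then $\ba_{x,A}\geq x_1$, and dually if $x_0$ is a peak then $\ab_{x,A}\leq x_1$.} Note first that by Proposition~\ref{P:simply_con} the endpoints $x_0=x$ and $x_n$ are incomparable, so $x_n\neq\ab_{x,A}$ and $x_n\neq\ba_{x,A}$.

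I would prove the preliminary claim by applying Lemma~\ref{L:davorsetzen} with $c=\ba_{x,A}$, which satisfies $c\geq x_0$. If the desired conclusion $c\geq x_1$ fails, the lemma produces one of two structures whose endpoints lie in the closed set $A$, and closedness forces an intermediate vertex into $A$ against the direct-path hypothesis. Specifically, the case $\ba_{x,A}<x_1$ yields a zigzag $\ba_{x,A},x_1,\ldots,x_n$ of length $n\geq 2$ forcing $x_1\in A$, contradicting $x_1\notin A$. In the incomparable case, the weak zigzag $\ba_{x,A},x_0,x_1,\ldots,x_n$ refines via Corollary~\ref{C:weak_zigzag} to a zigzag containing the new vertex $\inf(\ba_{x,A},x_1)$; closedness places this vertex in $A$, and since $\inf(\ba_{x,A},x_1)\geq x$, minimality of $\ba_{x,A}$ yields $\ba_{x,A}\leq\inf(\ba_{x,A},x_1)\leq x_1$, again contradicting incomparability.

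Granted the claim, in the base case $n=2$ (say $x_0$ a sink), the chain $\ba_{x,A}\geq x_1\geq x_2$ together with $\ab_{x,A}\leq x_0\leq x_1$ handles all points except the lower bound $\ab_{x,A}\leq x_2$; strictness everywhere follows because the interior vertex lies outside $A$ and because $x$ is incomparable with $x_n$. For the missing bound, the possibility $\ab_{x,A}>x_2$ is ruled out since it would force $x_2\leq x$, and if $\ab_{x,A}$ were incomparable with $x_2$, then $\sup(\ab_{x,A},x_2)$, an element of $A$ since $A$ is a sublattice, would sit strictly between $x_2$ and $x_1$, contradicting $(x_1,x_2)\cap A=\emptyset$. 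The peak case is dual.

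For the inductive step $n\geq 3$, the tail $x_1,\ldots,x_n$ is itself a direct path of length $n-1\geq 2$ from $x_1$ to $A$; the inductive hypothesis gives $x_i\in(\ab_{x_1,A},\ba_{x_1,A})$ for $i\geq 1$. The preliminary claim (in either the sink or the peak case, together with the trivial bound from the monotone step $x_0\to x_1$) implies $\ab_{x,A}\leq x_1\leq \ba_{x,A}$, and then the extremal definitions of the gates force $\ab_{x,A}\leq\ab_{x_1,A}$ and $\ba_{x_1,A}\leq\ba_{x,A}$, so the smaller interval sits inside $(\ab_{x,A},\ba_{x,A})$. Combined with $x_0=x\in(\ab_{x,A},\ba_{x,A})$, this completes the induction. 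The main obstacle is the base case, where the defining property $(x_{n-1},x_n)\cap A=\emptyset$ of a direct path is genuinely needed to pin down $\ab_{x,A}\leq x_n$; everything else falls out of the preliminary claim and the zigzag's monotonicity pattern.
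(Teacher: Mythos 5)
Your proof is correct, and it is organized quite differently from the paper's. The paper argues by a single contradiction: if the path escapes $(\ab_{x,A},\ba_{x,A})$, then by Proposition~\ref{P:simply_con} already the endpoint $x_n$ must escape (say $x_n\not\leq b=\ba_{x,A}$); one then locates the crossing index $i$ where $b\geq x_i$ but $b\not\geq x_{i+1}$, feeds the tail $x_i\ldots x_n$ into Lemma~\ref{L:davorsetzen}, and the subcases ($i=0$, $0<i<n-1$, $i=n-1$) each collapse against closedness or against $(x_{n-1},x_n)\cap A=\emptyset$. You instead apply Lemma~\ref{L:davorsetzen} only at the \emph{start} of the path, extract the clean intermediate fact that the first step of a direct path never overshoots the appropriate gate element ($\ba_{x,A}\geq x_1$ if $x_0$ is a sink, dually otherwise), and then push the rest through an induction on $n$, using that the tail $x_1\ldots x_n$ is itself a direct path and that the nesting $\ab_{x,A}\leq\ab_{x_1,A}$, $\ba_{x_1,A}\leq\ba_{x,A}$ follows from extremality once $\ab_{x,A}\leq x_1\leq\ba_{x,A}$ is known. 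The direct-path hypothesis $(x_{n-1},x_n)\cap A=\emptyset$ is still essential, but it surfaces for you only in the base case $n=2$ (where you also rule out $\ab_{x,A}$ incomparable with $x_2$ via the sublattice property of $A$), whereas the paper invokes it in the subcase $i=n-1$. Your route trades the paper's one-shot crossing-index argument for an induction plus a reusable preliminary claim; both rely on exactly the same two ingredients (Lemma~\ref{L:davorsetzen} and closedness of $A$), so this is a genuine reorganization rather than a shortcut, but your ``preliminary claim'' isolates a monotonicity property of gates that the paper leaves implicit and that could be cited independently elsewhere.
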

\begin{proof}
  Let $x=x_0,x_1\ldots x_n$ be a direct path from $x$ to $A$ of length
  $n\geq 2$. If this path were not contained in
  $(\ab_{x,A},\ba_{x,A})$, it would follow from Proposition
  \ref{P:simply_con} that $x_n$ is not in $(\ab_{x,A},\ba_{x,A})$.
  Assume for example that $x_n\not <b=\ba_{x,A}$, i.e.\ $x_n\not\leq
  b$ since $x_n$ is not comparable with $x$. Choose an $i<n$ such that
  $b\geq x_i$, but $b\not\geq x_{i+1}$. Clearly $x_i$ is a sink in the
  zigzag $x_0,x_1\ldots x_n$. By Lemma \ref{L:davorsetzen} there are
  two cases.
  \begin{enumerate}
  \item $b<x_{i+1}$ and $b,x_{i+1}\ldots x_n$ is a zigzag. Since now
    $x<x_{i+1}$, we have $i=0$ and therefore $x_{i+1}\not\in A$. This
    contradicts the closedness of $A$.
  \item $b,x_i,x_{i+1}\ldots x_n$ is a weak zigzag. Here we have again
    two subcases: if $i<n-1$, we have $x_{i+1}\not\in A$ and, after a
    refinement, a contradiction to the closedness of $A$. Otherwise,
    if $i=n-1$, set $c=\inf(b,x_n)$, which lies in $A$. Then
    $x_{n-1}\leq c<x_n$. This contradicts the directness of our path.
  \end{enumerate}
\end{proof}
  We will use the following notation: if $\ab_{x,A}\in\A_s$ and
  $\ba_{x,A}\in\A_t$, we write $|x,A|=|t-s|$. Note that $x\not\in A$
  implies $|x,A|\geq 2$.
\begin{corollary}\label{C:eng_closed}
  If $A$ is closed and $|x,A|=2$, then $A\cup\{x\}$ is closed.
\end{corollary}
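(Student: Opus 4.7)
The plan is to invoke Lemma~\ref{L:einfachclosed}, which reduces closedness of $A\cup\{x\}$ to showing that no direct path from $x$ to $A$ has length $\geq 2$. So I would suppose, toward a contradiction, that $x=x_0,x_1,\ldots,x_n$ is such a direct path with $n\geq 2$, set $a=\ab_{x,A}\in\A_s$ and $b=\ba_{x,A}\in\A_{s+2}$, and derive a contradiction from the hypothesis $|x,A|=2$.

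The key step is to apply Lemma~\ref{L:interval}, which places the entire path inside the open interval $(a,b)$. Combined with axiom~1 of $N$-geometries, this forces every element $y\in(a,b)$ into the single intermediate layer $\A_{s+1}$: from $a<y<b$ and $a\in\A_s$, $b\in\A_{s+2}$, the layer index $r$ of $y$ satisfies $s<r<s+2$, hence $r=s+1$. Thus all $x_i$ lie in $\A_{s+1}$ (and $x$ itself does too, since $x\notin A$ makes the inequalities $a\leq x\leq b$ strict).

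The contradiction then comes directly from the zigzag condition. In any zigzag, consecutive elements are strictly comparable, since sinks $a_j$ and peaks $b_j$ satisfy $a_j\neq b_j$ and $b_j\neq a_{j+1}$ by definition. But by axiom~1, two strictly comparable elements must lie in different layers, which is incompatible with all $x_i\in\A_{s+1}$. In particular, already $x_0$ and $x_1$ cannot coexist in $\A_{s+1}$.

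I expect no significant obstacle: the corollary is essentially a one-line layer-counting consequence once Lemma~\ref{L:interval} has confined the path to $(a,b)$. The only small care needed is to make sure that $x$ itself is captured by the interval argument (it is, because $x\notin A$ forces strict inequalities with $\ab_{x,A}$ and $\ba_{x,A}$), so that no degenerate configuration of the path's endpoint escapes the layer trap.
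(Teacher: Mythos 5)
Your proposal is correct and matches the paper's argument: both reduce to showing no direct path of length $\geq 2$ exists (you via Lemma~\ref{L:einfachclosed}, the paper via the equivalent Remark~\ref{r:direct}), then apply Lemma~\ref{L:interval} to trap the path in $(\ab_{x,A},\ba_{x,A})$ and conclude by the layer-counting contradiction. This is essentially the paper's own proof, just with the reduction step invoked by name.
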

\begin{proof}
  For any zigzag $x=x_0, x_1,\ldots x_m$ from $x$ to some $x_m\in A$
  we obtain a direct path $x=x_0\ldots x_n'$ from $x$ to $A$ as in
  Remark~\ref{r:direct}. If $n\geq 2$ this direct path lies completely
  in $(\ab_{x,A},\ba_{x,A})$. So all $x_i$ would have to lie in the
  same layer by assumption. Hence $n\leq 1$. Starting from $x_1'$ we
  obtain a new zigzag to $x_m$. Since $A$ is closed all of its
  elements lie in $A$ showing $A\cup\{x\}$ to be closed as well.
\end{proof}
Let $A\subset B$ be closed subsets of $M$. We call $B$ \emph{finitely
  constructible} over $A$ if there is a sequence $v_1,\ldots v_k$ such
that $B=A\cup\{v_1,\ldots,v_k\}$ and all $B_i=A\cup\{v_1,\ldots,v_i\}$
are closed in $M$. We assume also that $v_i\not\in B_{i-1}$.
The following proposition corresponds to Prop.\ 2.19 in  \cite{kT14}.
\begin{proposition}\label{P:endlicher_abschluss}
  Let $A$ be closed in $M$, then any $x\in M$ is contained in a
  finitely constructible extension $B$ of $A$.
\end{proposition}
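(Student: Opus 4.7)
The plan is a double induction: the outer induction is on $|x, A|$, and the inner induction, carried out inside each outer step for fixed value of $|x, A|$, is on $\delta(x, A)$, understood to be $\infty$ when no direct path from $x$ to $A$ of length $\geq 2$ exists. All base cases are immediate: if $x \in A$ take $B = A$; if $|x, A| = 2$, Corollary \ref{C:eng_closed} gives $B = A \cup \{x\}$; and if $\delta(x, A) = \infty$, Lemma \ref{L:einfachclosed} again gives $B = A \cup \{x\}$. So I may assume $|x, A| = k \geq 3$ and there is a direct path $x = x_0, x_1, \ldots, x_n$ of minimal length $n = \delta(x, A) \geq 2$. Setting $a = \ab_{x,A}$ and $b = \ba_{x,A}$, Lemma \ref{L:interval} confines the entire path to $(a, b)$; in particular $x_n \in A \cap (a, b)$ is incomparable with $x$.

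The inductive step splits into two cases according to the lattice relationship between $x$ and $x_n$. In the first case, either $\inf(x, x_n) > a$ or $\sup(x, x_n) < b$; in each sub-case I let $y$ denote the corresponding meet or join, which lies strictly between $a$ and $x$ (respectively between $x$ and $b$). The maximality of $a$ (respectively minimality of $b$) forces $y \notin A$, and combining $y \leq x_n \in A$ (respectively $y \geq x_n$) with the fact that the layer of $x_n$ is strictly inside $(s, t)$ yields $|y, A| \leq k - 1$. The outer hypothesis applied to $(y, A)$ produces a finitely constructible $B' \supseteq A$ with $y \in B'$; since $y$ sits in a strictly inner layer between $a$ and $x$ (or $x$ and $b$), $\ab_{x, B'}$ (resp.\ $\ba_{x, B'}$) is pushed into a closer layer, giving $|x, B'| \leq k - 1$, and a second application of the outer hypothesis to $(x, B')$ delivers the required $B$.

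In the remaining case $\inf(x, x_n) = a$ and $\sup(x, x_n) = b$ I shift attention to $x_{n-1}$. Since $x_{n-1}$ is adjacent to $x_n \in A$ in the zigzag, $x_n$ realizes $\ba_{x_{n-1}, A}$ or $\ab_{x_{n-1}, A}$, and because the layer of $x_n$ is strictly inside $(s, t)$, the computation $|x_{n-1}, A| \leq k - 1$ is immediate. Applying the outer hypothesis to $(x_{n-1}, A)$ yields a finitely constructible $B' \supseteq A$ with $x_{n-1} \in B'$. Note $n \geq 3$ here, since $n = 2$ would force $x_1$ to be $\sup(x, x_n)$ or $\inf(x, x_n)$ and hence to lie in $\{a, b\} \subseteq A$, contradicting the direct-path hypothesis. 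So the truncated zigzag $x = x_0, x_1, \ldots, x_{n-1}$ has length $\geq 2$ and, by Remark \ref{r:direct}, reduces to a direct path from $x$ to $B'$ of length $\leq n - 1$; hence $\delta(x, B') < n$ while $|x, B'| \leq k$, and the inner hypothesis on $\delta$ (or the outer hypothesis, if $|x, B'| \leq k - 1$ happens to hold) closes the step.

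The main obstacle is precisely this second case: adding $x_{n-1}$ does not reduce $|x, \cdot|$, because $x_{n-1}$ is typically incomparable with $x$, it only reduces the combinatorial depth $\delta$ of the minimal direct path, which is why the secondary induction on $\delta$ nested inside the primary induction cannot be avoided. Throughout, the enabling tools are Lemma \ref{L:interval}, which confines all the relevant combinatorics to the interval $(a, b)$, and the maximality/minimality of $\ab_{x,A}$ and $\ba_{x,A}$, which pin down the gate values $\ab_{y, A}$ and $\ba_{y, A}$ that control the reduction of $|y, A|$.
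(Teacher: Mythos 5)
Your proof is correct and rests on the same lexicographic double induction on the pair $(|x,A|,\delta(x,A))$ and the same supporting results (Lemma~\ref{L:einfachclosed}, Lemma~\ref{L:interval}, Corollary~\ref{C:eng_closed}) as the paper, but you adjoin a different intermediate vertex. The paper always adjoins $x_1$, the neighbour of $x$ on a shortest direct path: Lemma~\ref{L:interval} puts $x_1$ in $(a,b)$, so $|x_1,A|\le|x,A|$, and then either $\delta(x_1,A)\le n-1$ (if $n\ge3$) or $|x_1,A|<|x,A|$ (if $n=2$, since $x_1$ is comparable with $x_2\in A\cap(a,b)$), giving a finitely constructible $B'\ni x_1$; since $x$ is comparable with an element of $B'$ strictly inside $(a,b)$, $|x,B'|<|x,A|$ and the outer induction finishes. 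You work from the far end instead, adjoining $\inf(x,x_n)$ or $\sup(x,x_n)$ when one of these lies strictly inside $(a,b)$, and otherwise $x_{n-1}$, which forces the extra case split and the observation $n\ge3$. Both routes are sound; the paper's choice is tidier, and in fact your first case is superfluous: adjoining $x_{n-1}$ alone handles everything, because if the reduced direct path from $x$ to $B'$ has length $\le1$ then the element of $B'$ witnessing this is comparable with $x$ and lies strictly inside $(a,b)$, whence $|x,B'|<|x,A|$ and the outer hypothesis applies. Your closing parenthetical anticipates exactly this, but the claim $\delta(x,B')<n$ should be qualified accordingly: the truncation bounds $\delta(x,B')$ only when the reduced path has length at least $2$; otherwise one should observe that $|x,B'|$ has dropped.
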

\begin{proof}
  We will prove the proposition by induction on $|x,A|$ and
  $\delta(x,A)$.\\

  If $A\cup\{x\}$ is closed, there is nothing to show. So, by Lemma
  \ref{L:einfachclosed}, $\delta(x,A)=n$ is defined. Let $x=x_0\ldots
  x_n$ be a direct path from $x$ to $A$. We will show first, that
  $x_1$ is contained in a finitely constructible extension $B'$ of
  $A$. For this we note that by Lemma \ref{L:interval}, we have
  $x_1\in(a,b)$, where $a=\ab_{x,A}$ and $b=\ba_{x,A}$, and whence
  $|x_1,A|\leq|x,A|$. We distinguish two cases
  \begin{enumerate}
  \item $n\geq 3$. Then $\delta(x_1,A)$ is defined and $\leq n-1$. So
    we can find $B'$ by induction.
  \item $n=2$. Then $x_1$ is in $(a,x_2)$ or in $(x_2,b)$. In either
    case $|x_1,A|<|x,A|$ and we find $B'$ by induction.
  \end{enumerate}
  Since $x$ is in $(a,x_1)$ or in $(x_1,b)$, we have $|x,B'|<|x,A|$.
  Now we can use induction to find $B\supset B'$.
\end{proof}
\begin{proposition}\label{P:existenz}
  If $M$ is  $\omega$-saturated and $A$ is a finite closed subset
  of $M$, then for any $a<b$ in $A$ and every $s$ between the layers
  of $a$ and $b$, there is a simple extension $A(x)$ of type $(a,b,s)$
  which is closed in $M$.
\end{proposition}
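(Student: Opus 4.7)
The plan is to realize $x$ by $\omega$-saturation applied to a partial type $p(x)$ over $A$ whose formulas directly encode the two required properties. I take $p(x)$ to consist of: (i)~$x\in\A_s$ and $a<x<b$; (ii)~for each $c\in A$, the atomic formula expressing the comparability between $c$ and $x$ forced by the simple-extension type $(a,b,s)$---namely $c<x$ if $c\le a$, $x<c$ if $c\ge b$, and $c$ incomparable with $x$ otherwise; and (iii)~for each $c\in A$ and each integer $k\ge 2$, the first-order formula stating that no direct path of length $k$ from $x$ to $c$ exists in $M$. The formulas of (iii) are first-order with parameters in $A$ because $A$ is finite, so ``$y\notin A$'' and ``$(y,c)\cap A=\emptyset$'' are finite conjunctions. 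Any realization $x$ then yields, via Lemma~\ref{L:simple_lattice} applied to (i)+(ii), a simple extension $A\cup\{x\}$ of $A$ of type $(a,b,s)$, and via Lemma~\ref{L:einfachclosed} applied to (iii), closedness of this extension in~$M$.

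By $\omega$-saturation, it remains to verify that $p(x)$ is finitely satisfiable in $M$. I would argue this by induction on $m=|A\cap(a,b)\setminus\{a,b\}|$, the number of $A$-elements strictly between $a$ and $b$. In the base case $m=0$, the third axiom of Theorem~\ref{T:vollstaendig} provides infinitely many $x\in\A_s\cap(a,b)$. Any $c\in A$ with $a<c\le x$ would lie strictly between $a$ and $b$ (as $c\le x<b$), contradicting $m=0$; hence $\ab_{x,A}=a$, and dually $\ba_{x,A}=b$. Moreover any hypothetical direct path from $x$ to $A$ of length $\ge 2$ would, by Lemma~\ref{L:interval}, lie in $(a,b)$ and end in $A\cap(a,b)\setminus\{a,b\}=\emptyset$, a contradiction. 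So $A\cup\{x\}$ is closed and $x$ satisfies all of $p(x)$.

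The inductive step, $m\ge 1$, is where the main obstacle lies. The natural idea is to pick an interfering $c\in A\cap(a,b)\setminus\{a,b\}$ and appeal to the inductive hypothesis applied to a sub-interval---$(a,c)$ if $s$ falls between the layers of $a$ and $c$, or $(c,b)$ otherwise---each of which has strictly fewer interfering $A$-elements, to produce an $x$ of layer $s$ that is closed over $A$ on the sub-problem. The difficulty is that such an $x$ naturally realizes the simple-extension type $(a,c,s)$ or $(c,b,s)$ over $A$ rather than $(a,b,s)$, so the types may disagree on $\ba_{x,A}$ or $\ab_{x,A}$. Resolving this requires either combining two sub-interval constructions, or adjoining an auxiliary closed extension $A'\supset A$ containing an intermediate gate $y$ of type $(a,b,s')$ over $A$ with $s'$ strictly between $s_a$ and $s_b$, running the argument over $A'$, and then using Lemma~\ref{L:interval} together with simple connectivity (Proposition~\ref{P:simply_con}) to show that both the desired type $(a,b,s)$ over the original $A$ and the closedness of $A\cup\{x\}$ in $M$ are inherited from the corresponding properties over $A'$.
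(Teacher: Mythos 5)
Your overall framework — realize a partial type $p(x)$ over $A$ encoding the simple-extension type $(a,b,s)$ plus the non-existence of direct paths of length $k\geq 2$ to $A$, then invoke $\omega$-saturation — is essentially the framework the paper uses implicitly, and your observation that $p(x)$ is first-order over the finite set $A$ is correct, as is the reduction via Lemmas~\ref{L:simple_lattice} and~\ref{L:einfachclosed}. Your base case ($m=0$, i.e.\ $A\cap(a,b)=\emptyset$) is also sound and in fact realizes the whole type outright.

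However, you acknowledge, correctly, that the inductive step $m\ge 1$ is an obstacle, and neither of your two sketched fixes closes it. Your second fix — adjoin a single $y$ of type $(a,b,s')$ over $A$, work over $A'=A\cup\{y\}$, and then argue that the type $(a,b,s)$ and closedness of $A\cup\{x\}$ are ``inherited'' from $A'$ — is circular, because producing $y$ of type $(a,b,s')$ over $A$ with $A\cup\{y\}$ closed is an instance of the very proposition being proved, with the same $a$, $b$, and $A$, hence the same $m$; and if you instead produce $y$ by induction over a subinterval $(a,c)$ or $(c,b)$ for some $c\in A\cap(a,b)$, then any subsequent $x$ over $A'$ inherits $\ba_{x,A}=c$ (or $\ab_{x,A}=c$) rather than $b$ (or $a$), which is exactly the type disagreement you flagged. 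Moreover, the appeal to ``closedness of $A\cup\{x\}$ inherited from $A'$'' is not valid in general: $A'\cup\{x\}$ being closed does not imply $A\cup\{x\}$ is closed.

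The paper's proof of Proposition~\ref{P:existenz} is genuinely different. It inducts on $t-r$, the difference of the layers of $a$ and $b$, not on the number of interfering $A$-elements. In the inductive step it builds, inside a closed extension $A'\supset A$, an arbitrarily long ladder $a_0,b_0,a_1,b_1,\dots,a_k,b_k$ of simple closed extensions (each supplied by the inductive hypothesis at smaller layer gap, alternating types $(a_i,b,t-1)$ and $(a,b_i,r+1)$), then finds $x$ of type $(a,b_k,s)$ with $A_k\cup\{x\}$ closed. The crucial structural point — which your sketch lacks — is then that any zigzag from $x$ into $A$ staying in $(a,b)$ must descend the entire ladder $x,b_k,a_k,\dots,b_0,a_0$, so $\delta(x,A)\ge 2(k+1)$. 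This is precisely what makes $p(x)$ finitely satisfiable, and $\omega$-saturation is then used in an essential way to pass from these finite approximations to a full realization; your inductive step as outlined would not need $\omega$-saturation at all, which is a further sign that it cannot work. In short: the gap in your proposal is the finite-satisfiability argument in the general case, and the missing idea is the paper's ladder construction bounding $\delta(x,A)$ from below.
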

\begin{proof}
  Assume $a\in\A_r$ and $b\in\A_t$. We proceed by induction on
  $t-r$.\\

  \noindent If $t-r=2$, the third axiom scheme in the formulation of
  Theorem \ref{T:vollstaendig} yields an $x\in\A_s\setminus A$, which
  lies between $a$ and $b$. Clearly $a=\ab_{x,A}$, $b=a_{x,A}$ and
  $A\cup\{x\}$ is closed by Corollary \ref{C:eng_closed}.\\

  \noindent Now assume $t-r\geq 3$. Then $r+2\leq s$ or $s\leq t-2$.
  We consider only the second case, the first is dual to it. Let $a_0$
  be an arbitrary element of $\A_{r+1}\cap(a,b)$ and $A'$ a finite
  closed extension of $A$ which contains $a_0$. Now, using the
  induction hypothesis, we find a sequence of closed sets
  \[A'=A'_0\subset A_0\subset A'_1\subset A_1\subset\] such that
  $A_i=A'_i(b_i)$ is a simple extension of type $(a_i,b,t-1)$ and
  $A'_{i+1}=A_i(a_{i+1})$ is a simple extension of type $(a,b_i,r+1)$.
  Stop the construction at some arbitrary stage $k$. Then, using
  induction, choose a simple extension $A_k(x)$ of type $(a,b_k,s)$
  which is closed in $M$.

  The element $x$ lies in $\A_s$ and lies between $a$ and $b$. Since
  the only element in $A_k\cap(a,b)$ which is comparable with $x$ is
  $b_k$, we actually have $a=\ab_{x,A}$, $b=\ba_{x,A}$

  Consider a zigzag $x=x_0\ldots x_n$ which connects $x$ with an
  element of $A$ and lies completely in $(a,b)$. Since $A_k\cup\{x\}$
  is closed, $x_1$ lies in $A_k$. Since $x_1$ is comparable with $x$,
  we have $x_1=b_k$. Continuing this way we see that our sequence
  begins with $x,b_k,a_k\ldots b_0,a_0$, and therefore $n\geq 2(k+1)$.
  So, either $A\cup\{x\}$ is closed or $\delta(x,A)\geq 2(k+1)$. Since
  $k$ was arbitrary, the desired $x$ exists by the $\omega$-saturation
  of $M$.
\end{proof}
\begin{corollary}\label{C:back}
  Let $M$ be $\omega$-saturated containing a finite closed subset $A$.
  Assume that $A$ is also a closed subset of a finite simply connected
  $N$-geometry $B$. Then $M$ contains a closed subset $B'$ which is
  isomorphic to $B$ over $A$.
\end{corollary}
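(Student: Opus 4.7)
The plan is to transfer a finitely constructible presentation of $B$ over $A$ into $M$ one simple extension at a time, using Proposition~\ref{P:existenz} to realize each step inside $M$.

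First I will show that $B$ itself is finitely constructible over $A$. The proof of Proposition~\ref{P:endlicher_abschluss} uses only the lattice structure and simple connectedness of the ambient set, not $\omega$-saturation; so it applies inside the finite simply connected $N$-geometry $B$, regarded as its own ambient lattice. Iterating the proposition over the finitely many elements of $B\setminus A$ produces a chain
\[A=B_0\subsetneq B_1\subsetneq\dotsb\subsetneq B_k=B\]
of subsets closed in $B$, where each $B_{i+1}=B_i\cup\{v_{i+1}\}$ is a simple extension of $B_i$ of some type $(a_{i+1},b_{i+1},s_{i+1})$ with $a_{i+1},b_{i+1}\in B_i$.

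Next I construct $B'$ by recursion on $i$: at stage $i$ I produce a finite closed subset $B'_i\subseteq M$ and an isomorphism $f_i\colon B_i\to B'_i$ of $N$-geometries fixing $A$ pointwise. Start with $B'_0=A$ and $f_0=\mathrm{id}_A$; this is fine because $A$ is closed in $M$ by hypothesis. Given $f_i$, apply Proposition~\ref{P:existenz} to the finite closed set $B'_i\subseteq M$ and to the type $(f_i(a_{i+1}),f_i(b_{i+1}),s_{i+1})$, obtaining an element $x_{i+1}\in M$ such that $B'_{i+1}:=B'_i(x_{i+1})$ is a simple extension of the prescribed type which is closed in $M$. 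Extend $f_i$ by sending $v_{i+1}\mapsto x_{i+1}$ to define $f_{i+1}$. After $k$ steps, setting $B'=B'_k$ yields a closed subset of $M$ together with an isomorphism $f_k\colon B\to B'$ over $A$.

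That each $f_{i+1}$ really is an isomorphism of $N$-geometries is immediate from the definition of simple extension: the order relations and layer of the new point, relative to the elements of the base, are completely determined by the triple $(a,b,s)$, and the two base structures $B_i$ and $B'_i$ are already isomorphic via $f_i$. The one place that needs a little care is the initial step — justifying that Proposition~\ref{P:endlicher_abschluss} really does apply inside $B$ — but once one checks that its proof nowhere invokes $\omega$-saturation, everything else is a routine inductive transfer using Proposition~\ref{P:existenz}.
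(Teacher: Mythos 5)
Your proof is correct and follows essentially the same route as the paper's: first observe that Proposition~\ref{P:endlicher_abschluss} (whose proof is purely lattice-theoretic and does not invoke $\omega$-saturation) yields a finite construction chain $A=B_0\subset\dotsb\subset B_k=B$ of subsets closed in $B$, then transfer this chain into $M$ one simple extension at a time via Proposition~\ref{P:existenz}, extending the identity on $A$ to an isomorphism at each step. Your explicit remark that Proposition~\ref{P:endlicher_abschluss} applies inside the abstract finite $N$-geometry $B$ is a point the paper leaves implicit; the paper instead spells out the appeal to Lemma~\ref{L:simple_lattice} to see that each $B_{i+1}$ is a simple extension of $B_i$, which you assert without naming the lemma, but both observations are trivial and the arguments are the same in substance.
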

\begin{proof}
  The proof of Proposition \ref{P:endlicher_abschluss} shows that $B$
  is constructible over $A$. Write $B=A\cup\{v_1,\ldots,v_k\}$ where
  all $B_i=A\cup\{v_1,\ldots,v_i\}$ are closed in $B$ and $v_i\not\in
  B_{i-1}$. We define a sequence $\mathrm{id}_A=f_0\subset
  f_i\subset\dotsb f_k$ of isomorphisms between the $B_i$ and closed
  subsets $B'_i$ of $M$ as follows: if $f_i$ is defined, note that by
  Lemma \ref{L:simple_lattice} (applied to $B$) $B_{i+1}=B_i(v_{i+1})$
  is a simple extension, say of type $(a,b,s)$.
  Proposition~\ref{P:existenz} gives us a closed simple extension
  $B'_{i+1}=B'_i(v'_{i+1})$ of type $(f_i(a),f_i(b),s)$. Then
  $f_{i+1}$ can be defined by $f_{i+1}(v_{i+1})=v'_{i+1}$. Finally
  $f_k:B_k\to B'_k$ is the desired isomorphism.
\end{proof}
\begin{proof}[Proof of \ref{T:vollstaendig}]
  It remains to show that any two free $N$-pseudospaces $M$ and $M'$
  are elementarily equivalent.
  For this it suffices to show that if $M$ and $M'$ are
  $\omega$-saturated, then they are partially isomorphic. Indeed, by
  Proposition~\ref{P:endlicher_abschluss} and Corollary~\ref{C:back}
  the family of all isomorphisms $f:A\to A'$ between finite closed
  subsets of $M$ and $M'$ has the back and forth property. It is not
  empty because the subset $\{\Null,\Eins\}$ is closed in both models.
\end{proof}
\begin{corollary}\label{C:closed_type}
  Two closed sets have the same type if they are isomorphic. More
  precisely: any isomorphism between closed subsets of free
  $N$-pseudo\-spaces is an elementary map.\qed
\end{corollary}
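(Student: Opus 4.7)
The plan is to reduce the statement to the case of a finite closed set and then invoke the back-and-forth family already constructed in the proof of Theorem~\ref{T:vollstaendig}.

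First I would observe that elementarity of an isomorphism $f\colon A\to A'$ between closed sets only needs to be checked on finite subtuples of $A$. By iterated application of Proposition~\ref{P:endlicher_abschluss} starting from $\{\Null,\Eins\}$, every such subtuple $\bar a$ is contained in a finite closed subset $B\subseteq A$; and $f(B)$ is automatically closed in $M'$, since closedness depends only on the isomorphism type of the induced substructure. So it suffices to prove the corollary when $A$ itself is finite.

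Next I would pass to $\omega$-saturated elementary extensions $N\succeq M$ and $N'\succeq M'$ and argue that $A$ remains closed in $N$ (and $A'$ in $N'$). For finite $A$ the statement ``every zigzag of length $n$ with both endpoints in $A$ lies entirely in $A$'' is, for each $n$, a single first-order sentence with parameters from $A$: the zigzag conditions (the inf/sup equalities and the required non-equalities) are first-order, and ``$x\in A$'' is a finite disjunction of equalities. Each of these sentences holds in $M$ and therefore holds in $N$. This transfer of closedness to the saturated extension, where finiteness of $A$ is essential, is the main point that needs checking; once it is in place the rest is routine.

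Finally, with $A,A'$ finite closed subsets of the $\omega$-saturated $N,N'$, I would run back-and-forth as in the proof of Theorem~\ref{T:vollstaendig}: given a finite closed isomorphism $g\colon C\to C'$ extending $f$ and a parameter $b\in N$, Proposition~\ref{P:endlicher_abschluss} applied inside $N$ enlarges $C\cup\{b\}$ to a finite closed $D\subseteq N$, and Corollary~\ref{C:back} supplies an isomorphic closed copy $D'\subseteq N'$ over $C'$; symmetrically on the other side. Hence the family of isomorphisms between finite closed subsets of $N$ and $N'$ extending $f$ has the back-and-forth property, so $f$ is elementary from $N$ to $N'$, and therefore from $M$ to $M'$.
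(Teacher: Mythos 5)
Your overall strategy is exactly the one the paper intends: since the \qed here just defers to the proof of Theorem~\ref{T:vollstaendig}, the content is precisely the reduction to the finite case, the passage to $\omega$-saturated elementary extensions, and the observation that the back-and-forth family of isomorphisms between finite closed subsets makes $f$ elementary. Your treatment of the saturation step is careful and correct, including the explicit remark that, for finite $A$, closedness is expressed by the first-order scheme ``no zigzag of length $n$ with endpoints in $A$ leaves $A$'' and therefore transfers up to $N\succeq M$.

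The one point that is phrased too loosely is the justification that $f(B)$ is closed in $M'$. You assert that ``closedness depends only on the isomorphism type of the induced substructure,'' but closedness is emphatically \emph{not} an intrinsic invariant of a finite $N$-geometry: a copy of $B$ sitting differently inside a model can fail to be closed, since the definition quantifies over zigzags in the whole ambient model. What actually saves the argument is that $A$ and $A'$ are themselves closed sublattices: any zigzag in $M'$ with endpoints in $f(B)\subseteq A'$ lies entirely in $A'$ (closedness of $A'$) and is a zigzag of the sublattice $A'$; applying $f^{-1}$ carries it to a zigzag of $A$, hence of $M$, with endpoints in $B$, and closedness of $B$ in $M$ finishes. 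So what is preserved by $f$ is ``closedness of $B$ relative to $A$,'' and this coincides with absolute closedness only because $A$ and $A'$ are closed. You may also want to take $B=\cl(\bar a)$ outright (finite by Proposition~\ref{P:endlicher_abschluss}, contained in $A$ because $A$ is closed), which makes this step clean. With that clarification, the proof is correct and matches the paper's intended route.
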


\section{Independence}

\noindent In this section we work in a big saturated free
$N$-pseudospace $M$, (the monster model).\\

For a subset $A$ of $M$, we denote by $\cl(A)$ the \emph{closure} of
$A$, the smallest closed subset of $M$ which contains $A$. We will see
later, that $\cl(A)$ is finite if $A$ is finite and has the same
cardinality as $A$ otherwise.

\begin{definition}[cp.\ \cite{kT14}, Thm.\ 2.35]
  Let $A$, $B$, and $C$ be subsets of $M$. We say that $A$ and $C$ are
  \emph{independent over} $B$, if every zigzag $x_0\ldots x_n$ between
  an element of $A$ and an element of $C$ \emph{crosses} $B$,
  i.e.\ some $x_i$ belongs to $\cl(B)$ or some interval
  $(x_i,x_{i+1})$ intersects $\cl(B)$. We write this as $A\IND_BC$.
\end{definition}

We will show that the theory of free $N$-pseudospaces is
$\omega$-stable and that independence coincides with forking
independence. Stability (actually $\omega$-stability) follows easily
from Proposition \ref{P:endlicher_abschluss} and Corollary
\ref{C:closed_type} (but we will write out another proof below). For
the characterization of forking independence we need a series of
lemmas.

\begin{definition}\label{D:independent}
  Let $X$ be a subset and let $A$ be a closed subset of $M$. The
  \emph{gate} $\gate(X/A)$ of $X$ over $A$ is the set of all endpoints
  of direct paths from elements of $X$ to $A$.
\end{definition}

\begin{proposition}\label{P:ind_closed}
  Let $A$, $B$ and $C$ be closed and $B$ a subset of $A$ and $C$. Then
  the following are equivalent.
  \begin{enumerate}[a)]
  \item\label{P:ind_closed:ind} $A\IND_B C$
  \item\label{P:ind_closed:gate} $\gate(A/C)\subset B$
  \item\label{P:ind_closed:free}
    \begin{enumerate}[i)]
    \item  $A$ is \emph{free} from $C$ over $B$, i.e.\ if $a\in A$,
      $c\in C$ and $a\leq c$, there is a $b\in B$ such that $a\leq
      b\leq c$.
    \item $A\cup C$ is closed.
    \end{enumerate}
  \end{enumerate}
The equivalence between \ref{P:ind_closed:ind}) and
\ref{P:ind_closed:gate}) holds for arbitrary $A$: that $A$ is closed,
or $B$ is a subset of $A$ is not used.
\end{proposition}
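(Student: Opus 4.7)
The plan is to prove $(a) \Leftrightarrow (b)$ by a direct argument (which does not use closedness of $A$ or $B \subseteq A$), and then close the loop with $(b) \Rightarrow (c)$ and $(c) \Rightarrow (b)$.

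The key auxiliary observation for $(a) \Leftrightarrow (b)$ is that a direct path $x_0, \ldots, x_n$ from an element of $A$ to the closed set $C$ cannot have an interior interval $(x_i, x_{i+1})$ with $i < n-1$ meeting $C$. Indeed, if some $b \in C$ lay in such an interval, then $b, x_{i+1}, \ldots, x_n$ would be a zigzag (a quick $\inf/\sup$ check at the modified peak or sink) with both endpoints in the closed set $C$, forcing $x_{i+1} \in C$ and contradicting directness. With this in hand, ``the direct path crosses $B$'' collapses to ``the endpoint $x_n$ lies in $B$'': a vertex $x_i \in B$ with $i < n$ would give $x_i \in C$, against directness; an interval $(x_i, x_{i+1})$ meeting $B \subseteq C$ is ruled out for $i = n-1$ by the definition of direct path and for $i < n-1$ by the observation. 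This gives $(a) \Rightarrow (b)$. For $(b) \Rightarrow (a)$, take a zigzag $x_0, \ldots, x_n$ with $x_0 \in A$ and $x_n \in C$: if $x_0 \in C$ then $x_0 \in \gate(A/C) \subseteq B$ via the trivial direct path; otherwise let $k \geq 1$ be minimal with $x_k \in C$ and replace $x_k$ by $\ba_{x_{k-1}, C}$ (or dually $\ab$) to produce a direct path from $x_0$ to $C$ whose endpoint lies in $\gate(A/C) \subseteq B$. This endpoint is either $x_k$ itself or lies in $(x_{k-1}, x_k) \cap B$, so the original zigzag crosses $B$.

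For $(b) \Rightarrow (c)$: part (i) applies (b) to the direct path of length at most $1$ from $a$ to $C$ (namely $a, \ba_{a,C}$ when $a \notin C$, or $a$ alone when $a \in C$), which contributes a gate element of $B$ lying between $a$ and $c$. Part (ii) uses $(b) \Rightarrow (a)$: a zigzag with endpoints in $A \cup C$ that does not already lie in the closed $A$ or closed $C$ goes from $A$ to $C$ (say), and (a) produces a crossing at some $b \in B \subseteq A \cap C$; whether $b$ is a vertex or lies strictly in some $(x_i, x_{i+1})$, the split sequences $x_0, \ldots, x_i, b$ and $b, x_{i+1}, \ldots, x_n$ are zigzags (checking the $\inf/\sup$ conditions), each with endpoints in $A$ or $C$, and closedness of each places every vertex into $A \cup C$. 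For $(c) \Rightarrow (b)$: given a direct path $x_0, \ldots, x_n$ from $a \in A$ to $c \in C$, closedness of $A \cup C$ in (ii) places every $x_i$ in $A \cup C$, and directness excludes $C$, so $x_{n-1} \in A$; freeness (i) applied to the comparable pair $x_{n-1} \in A$, $c \in C$ yields $b \in B$ between them, and $b = x_{n-1}$ is ruled out by $x_{n-1} \notin C \supseteq B$ while $b$ strictly between is ruled out by $(x_{n-1}, c) \cap C = \emptyset$, so $b = c \in B$.

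The main obstacle I expect is the interior-interval case in $(a) \Rightarrow (b)$: the direct-path definition only regulates the last interval, so a priori an internal interval could pass through $C$ (hence through $B$), permitting a $B$-crossing that is not at the endpoint. The tail-subzigzag observation above, which relies only on $C$ being closed, is the cleanest way I see to rule this out; everything else is essentially bookkeeping with the $\inf/\sup$ zigzag conditions.
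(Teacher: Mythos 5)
Your proof is correct and all the key technical ideas match the paper's: the crucial observation that an internal interval $(x_i,x_{i+1})$ with $i<n-1$ of a direct path to $C$ cannot meet $C$ (because the tail $b,x_{i+1},\dots,x_n$ would be a zigzag with both ends in the closed $C$) is exactly the paper's argument for the arbitrary-$A$ implication \ref{P:ind_closed:ind})$\Rightarrow$\ref{P:ind_closed:gate}); the reduction of an arbitrary $A$-to-$C$ zigzag to a direct path via the $\ba_{x_{k-1},C}$ (or $\ab$) replacement is Remark~\ref{r:direct}; and the splitting of a zigzag at a crossing point $b\in B\subseteq A\cap C$ into two halves forced into $A$ and $C$ is the paper's closedness argument. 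What is genuinely different is the \emph{decomposition}: you establish \ref{P:ind_closed:ind})$\Leftrightarrow$\ref{P:ind_closed:gate}) first, as a self-contained fact not using closedness of $A$ or $B\subseteq A$, and then close the loop with \ref{P:ind_closed:gate})$\Rightarrow$\ref{P:ind_closed:free})$\Rightarrow$\ref{P:ind_closed:gate}), deriving freeness directly from the gate condition applied to direct paths of length $\leq 1$, and closedness from the already proved \ref{P:ind_closed:gate})$\Rightarrow$\ref{P:ind_closed:ind}). The paper instead proves \ref{P:ind_closed:ind})$\Rightarrow$\ref{P:ind_closed:free})$\Rightarrow$\ref{P:ind_closed:gate})$\Rightarrow$\ref{P:ind_closed:ind}) and then adds \ref{P:ind_closed:ind})$\Rightarrow$\ref{P:ind_closed:gate}) separately for the final remark. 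Your arrangement is a bit tighter in that the ``arbitrary $A$'' equivalence is proved once and reused, rather than appearing as a bonus at the end. Two small points worth flagging: in \ref{P:ind_closed:free})$\Rightarrow$\ref{P:ind_closed:gate}) you (like the paper) silently apply freeness also when $c<x_{n-1}$, which needs the dual of the condition as literally stated in \ref{P:ind_closed:free})(i) --- either read freeness as symmetric, as the paper clearly intends, or note that the dual version follows; and you should say a word about $n=0$ there (freeness with $a=c=x_0$ forces $x_0\in B$). Neither affects the substance.
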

\begin{proof}
  \noindent\ref{P:ind_closed:ind})$\to$\ref{P:ind_closed:free}):
  Assume $A\IND_BC$. If $a\in A$, $c\in C$ and $a\leq c$, then $a$ or
  $ac$ is a zigzag from $A$ to $C$, depending on whether $a=c$ or
  $a<c$. This shows that $A$ is free from $C$ over $B$. To see that
  $A\cup C$ is closed we have to show that every path $x_0\ldots x_n$
  from $A$ to $C$ lies completely in $A\cup C$. If some $x_i$ is in
  $B$, we know that $x_0\ldots x_i$ is in $A$ and $x_i\ldots x_n$ is
  in $C$. If $(x_i,x_{i+1})$ intersects $B$ in $b$, then $x_0\ldots
  x_i,b$ is a zigzag, which must be in $A$, and $b,x_i\ldots x_n$ is a
  zigzag, which must be in $C$.\\

  \noindent\ref{P:ind_closed:free})$\to$\ref{P:ind_closed:gate}): Let
  $x_0\ldots x_n$ be a direct path from $A$ to $C$. If $n=0$, we have
  $x_n=x_0\in C\cap A=B$. Otherwise, $x_{n-1}\in A\setminus C$, so
  $(x_{n-1},x_n)$ intersects $B$ in $b$. Since the path is direct,
  this can only happen if $x_n=b$.\\

  \noindent\ref{P:ind_closed:gate})$\to$\ref{P:ind_closed:ind}): Let
  $x_0\ldots x_n$ be a zigzag from $A$ to $C$. We want to show that it
  crosses $B$. For this we can assume that $x_n$ is the first element
  in $C$. If the zigzag is a direct path from $A$ to $C$, we have
  $x_n\in B$. Otherwise for some $c\in (x_{n-1},x_n)\cap C$, the path
  $x_0\ldots x_{n-1},c$ is direct from $A$ to $C$, which again implies
  $c\in B$.\\

  \noindent\ref{P:ind_closed:ind})$\to$\ref{P:ind_closed:gate}): Let
  $x_0\ldots x_n$ be a direct path from $x_0\in A$ to $C$. No interval
  $(x_i,x_{i+1})$ can contain an element $c$ of $C$: For $i=n-1$ this
  would directly contradict the directness. For $i<n-1$ the zigzag
  $c,x_{i+1}\ldots x_n$ would imply $x_{i+1}\in C$, again a
  contradiction. So $x_n$ is the only element of $C$ which the path
  can cross. It follows $x_n\in B$.
\end{proof}
\begin{lemma}\label{L:free_amalgam}
  \begin{enumerate}
  \item\label{L:free_almalgam:existenz} Assume that on $A$ and $C$ are
    partial orders which agree on their intersection $B$. Then these
    orders have a unique extension to $A\cup C$ such that $A$ is free
    from $C$ over $B$. We denote this partial order by $A\otimes_B C$.
    If $A$ and $C$ are $N$-geometries, with common subgeometry $B$,
    $A\otimes_BC$ has a natural structure of an $N$-geometry.
  \item\label{L:free_almalgam:connected} If the lattices $A$ and $C$
    are $N$-geometries and $B$ closed in $A$ and $C$. Then
    $A\otimes_BC$ is a lattice with closed subsets $A$ and $C$. If
    furthermore $A$ and $C$ are simply connected, then so is
    $A\otimes_BC$.
  \end{enumerate}
\end{lemma}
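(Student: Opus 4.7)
I would define $x \le y$ on $A\cup C$ by three cases: (i) both $x,y\in A$ with $x\le_A y$; (ii) both $x,y\in C$ with $x\le_C y$; or (iii) $x$ and $y$ lie on opposite sides and there exists $b\in B$ with $x\le b\le y$ in the appropriate orders. This is well-defined because the two given orders agree on $B=A\cap C$. Reflexivity is immediate, antisymmetry follows from $A\cap C=B$, and the one nontrivial case of transitivity is a chain $x\le y\le z$ crossing from $A$ through $C$ back to $A$: the linking elements $b_1,b_2\in B$ with $x\le_A b_1\le_C y\le_C b_2\le_A z$ satisfy $b_1\le_B b_2$ by agreement of orders, so $x\le_A z$. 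Freeness is immediate from~(iii), and any order on $A\cup C$ realising freeness on top of the given ones must agree with this relation, proving uniqueness. For the $N$-geometry structure, the layer partition extends consistently since it already agrees on $B$, and $\Null,\Eins\in B$ remain the extremes; the order--layer compatibility axiom is transparent from the three-case definition.

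\textbf{Part 2.} Now assume $A,C$ are lattices with $B$ closed in both. For $a\in A$ and $c\in C$, set $b_1:=\ab_{a,B}$ and $b_2:=\ab_{c,B}$, which exist because $B$ is closed in the lattices $A,C$. A common lower bound $x$ of $a,c$ in $A\otimes_B C$ lying in $A$ satisfies $x\le_A \inf_A(a,b_2)$, and one lying in $C$ satisfies $x\le_C \inf_C(b_1,c)$. A case analysis using simple connectivity of $A,C$ together with closedness of $B$ shows that these two candidate values are always comparable in $A\otimes_B C$, so the greater one is $\inf(a,c)$; dually one obtains $\sup$. Closedness of $A$ in $A\otimes_B C$ then follows from the order definition: any zigzag with endpoints in $A$ that dips into $C\setminus B$ must enter and leave through $B\subset A$, and using the gates $\ab_{\cdot,B},\ba_{\cdot,B}$ one reroutes each excursion through $B$ to obtain a zigzag wholly inside $A$; symmetrically for $C$. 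For simple connectivity I apply Proposition~\ref{P:construction_sc} with simply connected sublattice $U=A$, constructing the required well-ordering of $V\setminus A=C\setminus B$ by ordering $C\setminus B$ so that each $B\cup\{v_\alpha\mid\alpha<\beta\}$ is a sublattice of $C$ (such an ordering exists because $C$ is itself a simply connected lattice with $B$ a closed sublattice). The inf/sup description above then gives that each $V_\beta=A\cup\{v_\alpha\mid\alpha<\beta\}$ is a sublattice of $V=A\otimes_B C$: the mixed-pair extremum of $x\in A$ with some $v_\alpha$ lands either in $A$ or in $B\cup\{v_\alpha\mid\alpha<\beta\}\subseteq V_\beta$, where closure is already available.

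\textbf{Main obstacle.} The crux is the comparability step in the lattice-structure argument: I must rule out that both $\inf_A(a,b_2)$ and $\inf_C(b_1,c)$ lie strictly outside $B$, since otherwise they would be incomparable in $A\otimes_B C$ and no maximum common lower bound would exist. I expect to rule this out by exhibiting a weak zigzag cycle --- refined to a zigzag cycle via Corollary~\ref{C:weak_zigzag} --- which contradicts the simple connectivity of $A$ or $C$, using closedness of $B$ to force the transitions of the cycle to happen at specific $B$-elements like $b_1,b_2,\inf_B(b_1,b_2)$.
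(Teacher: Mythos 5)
Part 1 is fine, and the paper itself only says ``easy to check.''

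In Part 2 the central difficulty --- which you correctly identify --- is \emph{not resolved}. You acknowledge in your ``Main obstacle'' paragraph that the comparability of $\inf_A(a,b_2)$ and $\inf_C(b_1,c)$ is the crux and only sketch a hope of deriving a zigzag cycle. The paper handles the dual statement (suprema) by an elementary seven-case analysis on the alternating sequence $b_a,a,u,b_c$ using \emph{only} closedness of $B$ in $A$ and $C$, not simple connectivity. This matters: the first assertion of Part 2 (that $A\otimes_BC$ is a lattice with $A,C$ closed) does \emph{not} assume simple connectivity, so an argument that invokes simple connectivity to build the lattice would prove a weaker statement. In the only nontrivial case ($b_a,a,u,b_c$ a weak zigzag), one refines to a zigzag $b_a,a',u,b_c$ inside $A$; closedness of $B$ in $A$ then forces $u\in B$, whence $b_a\le u$ and $v=\sup_C(c,b_a)\le\sup(c,u)=u$. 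Your sketch would need a concrete version of this; as stated there is a gap.

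Your closedness argument is also off the mark. Closedness of $A$ in $A\otimes_BC$ means that a zigzag with both endpoints in $A$ is \emph{itself} contained in $A$; ``rerouting each excursion through $B$ to obtain a zigzag wholly inside $A$'' produces a different zigzag and proves nothing about the original one. The correct move is by contradiction: take a maximal excursion $a,c_0\ldots c_n,a'$ with $a,a'\in A$ and $c_i\in C\setminus A$; pick $b\in B\cap[a,c_0]$ and $b'\in B\cap[c_n,a']$ (these exist by freeness); then $b,c_0\ldots c_n,b'$ lies in $C$ with endpoints in $B$ but interior outside $B$, contradicting closedness of $B$ in $C$.

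Finally, your route to simple connectivity via Proposition~\ref{P:construction_sc} depends on the unproved claim that $C\setminus B$ admits a well-ordering $(v_\alpha)$ with each $B\cup\{v_\alpha:\alpha<\beta\}$ a sublattice of $C$. That is a transfinite constructibility assertion about an abstract simply connected lattice $C$ over a closed sublattice $B$; nothing in the paper establishes it at this level of generality (Proposition~\ref{P:endlicher_abschluss} lives inside a free $N$-pseudospace and gives only finite constructibility). The paper's own argument is much shorter and bypasses all this: a zigzag cycle $Z$ in $A\otimes_BC$ is contained in $A\cup C$; since $A$ and $C$ are closed in $A\otimes_BC$, $Z$ must lie entirely in $A$ or entirely in $C$; either contradicts simple connectivity of that factor.
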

\begin{proof}
  \nc{\abc}{A\otimes_BC} \ref{L:free_almalgam:existenz}. is easy to
  check. For part \ref{L:free_almalgam:connected} we check first that
  $B$ and $C$ are sublattices of $\abc$. Indeed, if $c$ is a common
  upper bound of $a_1$ and $a_2$, there are $b_i$ with $a_i\leq
  b_i\leq c$. It follows that
  $a_i\leq\sup_A(b_1,b_2)=\sup_C(b_1,b_2)\leq c$. And whence
  $\sup_A(a_1,a_2)\leq c$. This shows that $\sup_A(a_1,a_2)$ is the
  supremum of $a_1,a_2$ in $\abc$. We have to show that two elements
  $a\in A$ and $c\in C$ have a supremum in $\abc$. Let $b_a$ be the
  smallest upper bound of $a$ in $B$, and $b_c$ be the smallest upper
  bound of $c$ in $B$. Set also $u=\sup_A(a,b_c)$ and
  $v=\sup_C(c,b_a$). Then $u$ is the smallest common upper bound of
  $a$ and $c$ in $A$, and $v$ the smallest common upper bound of $a$
  and $c$ in $C$. So we have to show that $u$ and $v$ are comparable.
  Consider the alternating sequence $b_a,a,u,b_c$. There are 7 cases:
  \begin{enumerate}
  \item\label{1} $b_a,a,u,b_c$ is a weak zigzag. This can then be
    refined to a zigzag $b_a,a',u,b_c$. Since $B$ is closed in $A$, we
    have $u\in B$. This implies $b_a\leq u$ and $v\leq\sup(c,u)=u$
  \item\label{2} $b_a\leq b_c$. Then $v\leq b_c\leq u$.
  \item\label{3} $b_c\leq b_a$. Then $u\leq b_a\leq v$
  \item $b_a\leq u$. Then $u=\sup(b_a,b_c)\in B$. This implies $v\leq
    u$ as in case \ref{1}.
  \item $u\leq b_a$. Then $u\leq v$.
  \item $a\leq b_c$. Then $b_a\leq b_c$. This is case \ref{2}.
  \item $b_c\leq a$. Then $b_c\leq b_a$. This is case \ref{3}.
  \end{enumerate}
  We have still to show that $A$ and $C$ are closed in $\abc$. So let
  $a,c_0\ldots c_n,a'$ be a zigzag with $a,a'\in A$ and $c_i\in
  C\setminus A$. If $b\in[a,c_0]$ and $b'\in[c_n,a']$, then zigzag
  $b,c_0\ldots c_n,b'$ shows that $B$ is not closed in $C$. Finally
  assume that $\abc$ contains a zigzag cycle $Z$. If $Z$ intersects
  $A$, it must be a subset of $A$, since $A$ is closed in $\abc$. Then
  $A$ would not be simply connected. Similarly $Z$ cannot intersect
  $C$. This is not possible.
\end{proof}

The following lemma is easy to check.
\begin{lemma}\label{L:splice}
  Consider two zigzags $\ldots a_n,b_n$ and $a'_0,b'_0\ldots$. Assume
  $a'_0=\inf(b_n,b'_0)$ and that $a_n$ and $a'_0$ are incomparable,
  then $\ldots a_n,b_n,a'_0,b'_0\ldots$ is a weak zigzag.\qed
\end{lemma}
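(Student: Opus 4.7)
The plan is to verify the weak zigzag conditions directly for the concatenated sequence $\ldots, a_n, b_n, a'_0, b'_0, \ldots$. First I would check alternation: the hypothesis $a'_0 = \inf(b_n, b'_0)$ immediately gives $a'_0 \leq b_n$ and $a'_0 \leq b'_0$, so the pattern peak--sink--peak fits at the splice point. All incomparability conditions $x_i \not\lessgtr x_{i+2}$ and $x_i \not\lessgtr x_{i+3}$ for triples or quadruples whose indices lie entirely inside one of the two given zigzags are free, since each of those is already a weak zigzag by Remark~\ref{R:cycle}.

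What remains are the new comparisons that cross the splice. Writing the combined sequence as $\ldots, b_{n-1}, a_n, b_n, a'_0, b'_0, a'_1, \ldots$, these are the four pairs $(b_{n-1}, a'_0)$, $(a_n, b'_0)$, $(b_n, b'_0)$, $(b_n, a'_1)$, with the first or the last dropped if the relevant component has no such element. The whole lemma thus reduces to showing that each of these four pairs is incomparable.

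I would dispatch all four with the same short argument, using only the inf hypothesis and the given incomparability of $a_n$ and $a'_0$. The central case is $(b_n, b'_0)$: if $b_n \leq b'_0$ then $a'_0 = \inf(b_n, b'_0) = b_n$, so $a_n \leq b_n = a'_0$, contradicting incomparability of $a_n$ and $a'_0$; while $b_n \geq b'_0$ forces $a'_0 = b'_0$, violating the zigzag condition $a'_0 \neq b'_0$. Once this pair is handled, the remaining three collapse to the same kind of contradiction: a comparability $a_n \lessgtr b'_0$ combines with $a'_0 \leq b'_0$ and $a_n \leq b_n$ via the inf to give $a_n \lessgtr a'_0$; a comparability $b_n \lessgtr a'_1$ pushes through $a'_1 \leq b'_0$ back to $b_n \lessgtr b'_0$; and $b_{n-1} \lessgtr a'_0$ combines with $a'_0 \leq b_n$ and the incomparability of $b_{n-1}$ and $b_n$ (which follows from the first sequence being a zigzag) to the same kind of collapse, again meeting the previous contradictions. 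I do not foresee any real obstacle; the verification is a routine enumeration of cases, consistent with the authors' remark that the lemma is easy to check.
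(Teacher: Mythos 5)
Your proof is correct and is exactly the direct case check that the paper leaves to the reader (the authors simply write ``easy to check'' and give no argument). The enumeration of the four cross-splice pairs $(b_{n-1},a'_0)$, $(a_n,b'_0)$, $(b_n,b'_0)$, $(b_n,a'_1)$ is the right reduction, and the arguments you give for the first three are fine.

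The one step stated too loosely is $(b_n,a'_1)$. The sub-case $b_n\le a'_1$ does push through $a'_1\le b'_0$ to $b_n\le b'_0$, but in the other sub-case $a'_1\le b_n$ the chain $a'_1\le b'_0$ gives no relation between $b_n$ and $b'_0$; one needs a little more. Either use the sup half of the zigzag condition for the second zigzag, $b'_0=\sup(a'_0,a'_1)$, together with $a'_0\le b_n$ and $a'_1\le b_n$ to conclude $b'_0\le b_n$, or observe directly that $a'_1\le b_n$ and $a'_1\le b'_0$ give $a'_1\le\inf(b_n,b'_0)=a'_0$, contradicting the incomparability of $a'_0$ and $a'_1$ inside the second zigzag. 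Either fix is two lines, and with it the verification is complete.
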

\begin{lemma}\label{L:connect}
  Let $a_0\ldots x\ldots a_n$ be a zigzag. Assume $a_0\leq r$,
  $a_n\leq s$ and $x\not\leq r,s$. Then $x$ lies on zigzag between $r$
  and $s$.
\end{lemma}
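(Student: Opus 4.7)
The plan is to split the given zigzag at $x$ into two halves $P_1\colon a_0,\ldots,x$ and $P_2\colon x,\ldots,a_n$, deform each half so that its outer endpoint becomes $r$ respectively $s$, and then splice the two deformed halves back together at $x$. Both halves have positive length, since $x\not\leq r$ rules out $x=a_0$ and $x\not\leq s$ rules out $x=a_n$.

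To deform $P_1\colon a_0=z_0,z_1,\ldots,z_k=x$, I would locate the largest index $j$ with $r\geq z_j$. Such a $j$ exists (since $r\geq a_0$) and satisfies $j<k$ (since $r\not\geq x$). The key observation is that $z_j$ must be a sink of $P_1$: otherwise $z_j$ would be a peak and $z_{j+1}\leq z_j\leq r$ would contradict the maximality of $j$. Hence $z_{j+1}$ is a peak with $r\not\geq z_{j+1}$, and Lemma~\ref{L:davorsetzen} applies to the sub-zigzag $z_j,z_{j+1},\ldots,z_k$ with parameter $c=r$. Its two cases yield either a zigzag $r,z_{j+1},\ldots,z_k=x$ directly, or a weak zigzag which refines (as in the remark following Lemma~\ref{L:davorsetzen}) to the zigzag $r,\inf(r,z_{j+1}),z_{j+1},\ldots,z_k=x$. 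Call the resulting zigzag $P_1'$, and construct $P_2'$ from $P_2$ dually.

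The final step is to concatenate $P_1'$ and $P_2'$ at $x$, and this splice is the place I expect to need real care. If both halves have length $\geq 2$ on their respective sides of $x$, the immediate neighbors of $x$ in $P_1'$ and $P_2'$ coincide with those in the original zigzag, so the $\sup/\inf$ condition at $x$ is inherited for free. In the edge case where one half collapses to length~$1$ --- for instance $P_1'=r,x$ with $x$ a peak --- the new left neighbor $r$ still satisfies $r\geq u$, where $u$ is the original left neighbor of $x$ in the given zigzag; hence $\sup(r,v)\geq\sup(u,v)=x$ (with $v$ the right neighbor of $x$), while $\sup(r,v)\leq x$ holds automatically since $r,v\leq x$, forcing $x=\sup(r,v)$. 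The variant with $\inf(r,x)$ in place of $r$, and the dual cases in which $x$ is a sink, are analogous. This yields the desired zigzag from $r$ through $x$ to $s$.
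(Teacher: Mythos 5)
Your proof is correct, and the core mechanism is the same as the paper's: truncate so that Lemma~\ref{L:davorsetzen} applies, then deform each end so that it reaches $r$ respectively $s$. The difference is organizational, and it costs you real work. The paper never splits the zigzag at $x$. It shortens the whole zigzag from both ends until $b_0\not\leq r$ and $b_{n-1}\not\leq s$ (this shortening never removes $x$, because any element it discards satisfies $\leq r$ or $\leq s$, while $x\not\leq r,s$), and then applies Lemma~\ref{L:davorsetzen} once at the front and once at the back of the surviving zigzag. Because the whole zigzag is deformed in place, the output is a zigzag by the lemma itself and still passes through $x$; there is nothing to re-glue. Your split-and-splice decomposition, by contrast, forces you to re-verify the $\sup/\inf$ condition at the join $x$, and you correctly notice that this is the delicate spot: you have to treat the collapse $P_1'=r,x$, the variant $P_1'=r,\inf(r,x),x$ where the inner neighbor of $x$ also changes, and the dual cases for a sink. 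You do handle these (the monotonicity of $\sup$ under raising a sink is exactly the right observation, and your aside about the ``variant with $\inf(r,x)$'' covers the case that your ``both halves of length $\geq 2$'' phrasing glosses over), but all of it is avoidable: by deforming the undivided zigzag, the sup/inf condition at $x$ is never touched in the first place.
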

\begin{proof}
  We may assume that $b_0\not\leq r$ and $b_{n-1}\not\leq s$. Now
  apply \ref{L:davorsetzen} to the beginning and to the end of the
  zigzag.
\end{proof}
\begin{lemma}
  Let $A$ be closed in $M$ and $X$ finite. Then $\cl(AX)\setminus A$
  is finite.
\end{lemma}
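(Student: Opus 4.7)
The plan is to reduce to the one-element case by induction on $|X|$ and then apply Proposition~\ref{P:endlicher_abschluss} directly. Since the latter provides, for any closed $A$ and any $x\in M$, a finite closed superset of $A\cup\{x\}$ via a finite construction sequence, the difference from $A$ is automatically finite; the closure $\cl(Ax)$, being the smallest closed set containing $A\cup\{x\}$, is then sandwiched inside this finite extension.

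First I would handle the case $X=\{x\}$ of a single element. Applying Proposition~\ref{P:endlicher_abschluss} to the closed set $A$ and the element $x$, I obtain a finitely constructible extension
\[
B=A\cup\{v_1,\ldots,v_k\}
\]
with every $B_i=A\cup\{v_1,\ldots,v_i\}$ closed and containing $x$. Because $B$ is closed and contains $A\cup\{x\}$, we have $\cl(Ax)\subseteq B$, hence
\[
\cl(Ax)\setminus A\subseteq B\setminus A\subseteq\{v_1,\ldots,v_k\},
\]
which is finite.

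For the general case I would induct on $|X|$. The base case $X=\emptyset$ is trivial. For the inductive step, write $X=Y\cup\{x\}$ and set $A'=\cl(AY)$. Then $A'$ is closed, $A'\setminus A$ is finite by the induction hypothesis, and $\cl(AX)=\cl(A'\cup\{x\})$. By the one-element case applied to the closed set $A'$, the set $\cl(A'\cup\{x\})\setminus A'$ is finite. Combining,
\[
\cl(AX)\setminus A=\bigl(\cl(A'\cup\{x\})\setminus A'\bigr)\cup(A'\setminus A)
\]
is finite, which completes the induction.

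There is no real obstacle here: the content of the lemma is already packaged in Proposition~\ref{P:endlicher_abschluss}, and the only thing to add is the reduction from finite $X$ to a single element, which is a straightforward two-line induction using the fact that $\cl(AX)=\cl(\cl(AY)\cup\{x\})$.
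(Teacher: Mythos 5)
Your proof is correct and follows the same approach as the paper's (which is just one line: apply Proposition~\ref{P:endlicher_abschluss}); you simply make explicit the routine induction on $|X|$ that the paper leaves implicit, using $\cl(AX)=\cl(\cl(AY)\cup\{x\})$ to reduce to the one-element case.
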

\begin{proof}
  By Proposition \ref{P:endlicher_abschluss} $X$ is contained in a
  finitely constructible extension of $A$.
\end{proof}
\begin{lemma}
  Let $a\ldots x\ldots b$ and $x,y\ldots c$ be two zigzags. Then
  $y\ldots c$ is a final segment of a zigzag from $a$ to $c$ or a
  final segment of a zigzag from $b$ to $c$
\end{lemma}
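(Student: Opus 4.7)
The plan is to proceed by case analysis on the local orientation of $x$ in each zigzag. By duality (swapping $\leq$ with $\geq$ and sinks with peaks), I may assume that $x$ is a sink of the first zigzag, with adjacent peaks $p, q$ satisfying $x = \inf(p, q)$, and that $y > x$, so that $y$ is the first peak of the second zigzag; write the first zigzag as $a, \ldots, a_{i-1}, p, x, q, a_{i+1}, \ldots, b$ and the second as $x, y, y_2, \ldots, c$.

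The central construction introduces the bridges $u := \inf(p, y)$ and $v := \inf(q, y)$, both lying in the interval $[x, y]$. The main case is when $p$ and $y$ are incomparable; then $u \notin \{p, y\}$, and I claim that
\[
a, \ldots, a_{i-1}, p, u, y, y_2, \ldots, c
\]
is a zigzag from $a$ to $c$, with $y, y_2, \ldots, c$ as a final segment. The verifications at the new positions are routine: $p = \sup(a_{i-1}, u)$ follows from $u \geq x$ combined with the original equality $p = \sup(a_{i-1}, x)$; $u = \inf(p, y)$ is by definition; and $y = \sup(u, y_2)$ follows dually from $y = \sup(x, y_2)$ and $u \geq x$. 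Symmetrically, if $q$ and $y$ are incomparable, the bridge $v$ and the right half of the first zigzag yield a zigzag from $b$ to $c$.

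A short enumeration rules out all other possibilities. Using the incomparability of $p, q$ (which follows from $\inf(p, q) = x \neq p, q$): if $y \leq p$ and $y \leq q$, then $y \leq \inf(p,q) = x$, contradicting $y > x$; if $p \leq y \leq q$, then $p \leq q$, contradiction. Thus the only remaining configuration is $p \leq y$ and $q \leq y$ strictly, in which both bridges collapse to $u = p$ and $v = q$.

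The hard part will be this remaining case. The key observation is that the sequence $p, y, y_2, \ldots, c$ already forms a zigzag from $p$ to $c$ (with $p$ viewed as the initial sink), since $y = \sup(p, y_2)$ follows from $p \geq x$ together with $y = \sup(x, y_2)$. The remaining work is to extend this backwards from $p$ to $a$ (or analogously on the $b$ side) without disturbing the tail $y, y_2, \ldots, c$; this I expect to achieve by a further application of Lemma~\ref{L:davorsetzen} with an auxiliary peak above $p$, whose existence is guaranteed by the saturation and axiom scheme of the pseudospace together with the simple connectedness of $M$.
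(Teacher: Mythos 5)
Your bridge construction $u=\inf(p,y)$ is correct for the cases where $p$ (resp.\ $q$) is incomparable with $y$: one checks $p=\sup(a_{i-1},u)$, $u=\inf(p,y)$, $y=\sup(u,y_2)$ directly, so the spliced sequence is a genuine zigzag and the tail $y,\ldots,c$ is untouched. This is a perfectly good alternative to the paper's use of Lemma~\ref{L:davorsetzen} and Lemma~\ref{L:splice}. However, the proof has two real gaps.

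First, the duality reduction is incomplete. Dualizing the order flips sink/peak in \emph{both} zigzags simultaneously, so the orbits under duality are: ($x$ a sink of both zigzags) and ($x$ a sink of one, a peak of the other). You cannot assume both ``$x$ is a sink of $a\ldots x\ldots b$'' and ``$y>x$''; the mixed configuration ($x$ a peak of the first zigzag with $x<y$, so that the adjacent $p,q$ in the first zigzag are \emph{sinks} with $p,q<x<y$) is simply not covered. The paper sidesteps this by never fixing the orientation of $x$ in the first zigzag: it works instead with $a_1,b_1$, the first and last elements of $a\ldots x\ldots b$ that are $\leq y$.

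Second, and more seriously, the remaining case $p,q<y$ is not proved. Your closing sentence appeals to ``saturation and axiom scheme of the pseudospace'' to manufacture an auxiliary peak, but this cannot be the right mechanism: the statement is a purely order-theoretic fact about zigzags in a simply connected lattice, with no model-theoretic input, and a correct proof should not invoke saturation. Looking only at the two peaks $p,q$ adjacent to $x$ is too local; one may need to travel further along the first zigzag. The paper first reduces to the case that the second zigzag is just $x,y,c$ (legitimate, since only the initial segment up to $y_2$ is ever disturbed), then sets $r=\sup(a_1,c)$ and $s=\sup(b_1,c)$, both $\leq y$, and splits on whether $r=y$, $s=y$, or $r,s<y$. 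The first two cases produce the desired zigzag by prepending a segment of the first zigzag; the third is shown to be impossible via Lemma~\ref{L:connect} together with Proposition~\ref{P:simply_con}: $x$ would lie on a zigzag from $r$ to $s$, forcing $\inf(r,s)\leq x$, contradicting $c\leq\inf(r,s)$ and $c\not\leq x$. That use of simple connectedness, not saturation, is the ingredient your sketch is missing, and your case split on $p,q$ alone does not line up with the paper's case split on $r,s$, so you cannot simply borrow the impossibility argument without reworking it.
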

\begin{proof}
  We need to prove this only for the first three elements of
  $x,y\ldots c$. So we may assume the path is $x,y,c$. Also we may
  assume that $x<y$. In the zigzag $a\ldots x\ldots b$ let $a_1$ be
  the first and $b_1$ the last element $\leq y$. Set $r=\sup(a_1,c)$
  and $s=\sup(b_1,c)$, so $r,s \leq y$. There are three cases.
  \begin{enumerate}
  \item $r=y$. There are two subcases.
    \begin{enumerate}
    \item $a_1=a$. Then $a,y,c$ is a zigzag.
    \item Otherwise, let $b'$ be the predecessor of $a_1$ in $a\ldots
      x\ldots b$. Then $b'$ is a peak and $a_1$ a sink. Lemma
      \ref{L:davorsetzen} yields to subsubcases.
      \begin{enumerate}
      \item $y<b'$. Then $x<b'$, and whence $a_1=x$. So $c,y,x\ldots
        b$ is a zigzag.
      \item $b'$ and $y$ are incomparable. Then by Lemma
        \ref{L:splice} \[a\ldots b',a_1,y,c\] is a weak zigzag, which
        refines to a zigzag which leaves $y,c$ unchanged.
      \end{enumerate}
    \end{enumerate}
    \item $s=y$. Analogously.
    \item $r,s<y$. We show that this cannot happen. The assumption
      implies that $x\not\leq r,s$. For, $x\leq r$ (together with
      $c\leq r$) would imply that $y\leq r$. Since $a_1\leq r$ and
      $b_1\leq s$ we use Lemma \ref{L:connect} to conclude that $x$
      lies on a zigzag between $r$ and $s$. It follows that
      $\inf(r,s,)\leq x$. This is not possible since $c\leq\inf(r,s)$
      and $c\not\leq x$ (remember that $x,y,c$ is a zigzag).
  \end{enumerate}
\end{proof}
\begin{corollary}[cp.\ \cite{kT14}, Prop.\ 2.28, 2.30]\label{C:cl_zz}
  The closure of a set $X$ consists of\/ $\Null$, $\Eins$ and all
  zigzags between elements of $X$.
\end{corollary}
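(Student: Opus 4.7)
The plan is to let $W(X)$ be the set of elements of $M$ lying on some zigzag whose two endpoints both belong to $X$ (with length-zero zigzags allowed, so that $X\subseteq W(X)$), set $Z=\{\Null,\Eins\}\cup W(X)$, and show $\cl(X)=Z$.

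One inclusion is immediate: $\cl(X)$ contains $X\cup\{\Null,\Eins\}$ and is closed under zigzags, hence contains $Z$. For the reverse, it suffices to check that $Z$ itself is closed. So fix a zigzag $y_0,\ldots,y_m$ with $y_0,y_m\in Z$; we must show each intermediate $y_i\in Z$. A quick calculation rules out $\Null$ or $\Eins$ as an endpoint of a zigzag of length $\geq 2$: if $y_0=\Null=a_0$, then the zigzag condition forces $b_0=\sup(\Null,a_1)=a_1$, contradicting $b_0\neq a_1$, and dually for $\Eins$. So in the only nontrivial case, namely $m\geq 2$, both endpoints lie in $W(X)$, and we may pick witness zigzags $P_1:a,\ldots,y_0,\ldots,a'$ and $P_2:b,\ldots,y_m,\ldots,b'$ with $a,a',b,b'\in X$.

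The core of the argument is a double application of the preceding Lemma. First, apply it to $P_1$ and the zigzag $y_0,y_1,\ldots,y_m$ with $x=y_0$: after possibly swapping $a\leftrightarrow a'$, this yields a zigzag $T$ from $a\in X$ to $y_m$ whose final segment is $y_1,y_2,\ldots,y_m$. Second, apply it to $P_2$ and the reversal of $T$ with $x=y_m$: after possibly swapping $b\leftrightarrow b'$, we obtain a zigzag $S$ from $b\in X$ to $a\in X$ whose final segment contains $y_{m-1},\ldots,y_1$. Hence each $y_i$ for $1\leq i\leq m-1$ lies on $S$, and therefore $y_i\in W(X)\subseteq Z$.

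The main obstacle is the bookkeeping in the two applications: the WLOG swaps must line up so that the chained conclusion really is a single zigzag $S$ between two elements of $X$ that traverses every intermediate $y_i$. The key structural point is that the first application preserves the whole tail $y_1,\ldots,y_m$ inside $T$, so after reversal the second application acts on a long initial segment $y_m,y_{m-1},\ldots,y_1$ and splices it onto a path beginning in $X$, delivering the desired $S$.
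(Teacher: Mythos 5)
Your proof is correct and follows essentially the same approach as the paper: define the candidate set $\{\Null,\Eins\}\cup W(X)$, show it is closed by applying the splice lemma twice (once to a witness zigzag for one endpoint, once, after reversal, to a witness for the other) so that the intermediate vertices land on a single zigzag between two elements of $X$. The only extra content is your explicit (and correct) check that $\Null$ and $\Eins$ cannot be endpoints of a zigzag of length at least $2$, a bookkeeping step the paper leaves implicit.
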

\begin{proof}
  Let $A$ be the set of all elements which lie on zigzags connecting
  elements of $X$. Let $x,P,x'$ be a zigzag which connects two
  elements of $x,x'\in A$. So $x$ lies on a zigzag which connects
  $a,b\in X$ and $x'$ lies on a zigzag which connects $a',b'\in X$.
  The lemma gives now a zigzag from, say, $a$ to $x'$ which contains
  $P,x'$, and again a zigzag from, say, $b'$ to $a$, which still
  contains $P$. This shows $P\subset A$.
\end{proof}
In the next lemma we make use of the following terminology. Let
$P=x_0\dots x_n$ be an alternating sequence, then the \emph{contents}
$\cont(P)$ is the set of elements of $P$ together with the union of
all intervals $(x_i,x_{i+1})$. Hence a sequence $P$ crosses $B$ if an
only if $\cont(P)$ intersects $B$. Note that
$\cont(P)\subset\cont(P')$ if $P$ is a refinement of $P'$.
\begin{lemma}
  Let $P$ be a zigzag from $a$ to $x$ and $x<c$. Then there is a
  zigzag $Q$ from a to $c$ such that $\cont(Q)\setminus\cont(P)\subset
  U$, where $U=\{b\mid b\leq c \text{ and } b\not<x\}$.
\end{lemma}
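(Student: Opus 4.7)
The plan is to prove the lemma by direct case analysis on the structure of $P$ near its endpoint $x$, splitting according to whether $x$ is a peak or a sink of $P$ and according to the position of $c$ relative to the peaks of $P$. The base case $P=(x)$ is immediate: taking $Q=(x,c)$ gives new content $[x,c]\subseteq U$.

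For the \emph{peak case}, write $P=\ldots,a,x$ with $a<x$. If $x$ is the only peak of $P$ (so $P=(a,x)$), I set $Q=(a,c)$; any new $z\in(a,c)$ either sits in $(a,x]\subseteq\cont(P)$ or satisfies $z\not<x$, hence $z\in U$. Otherwise let $b$ be the peak of $P$ preceding $a$. Since $x<c\leq b$ would contradict the incomparability of the adjacent peaks $b$ and $x$, we have $c\not\leq b$. If $c$ and $b$ are incomparable, set $a'=\inf(b,c)$ and take $Q=\ldots,b,a',c$; the zigzag conditions propagate through the lift $a\leq a'\leq b$, and any $z\in(a',c)$ with $z<x$ satisfies $z>a'\geq a$, hence $z\in(a,x)\subseteq\cont(P)$. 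If $c\geq b$, I \emph{cascade back}: let $b_m$ be the latest peak of $P$ with $c\not\geq b_m$, or declare no such $b_m$ exists. In the first case, take $Q$ to be $P$ truncated after $b_m$ followed by $\inf(b_m,c)$ and $c$; in the second case, take $Q=(a_0,c)$ with $a_0$ the starting point of $P$.

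For the \emph{sink case}, write $P=\ldots,y,x$ with $y>x$. If $y\geq c$, take $Q$ to be $P$ with $x$ replaced by $c$ (or $P$ with $x$ dropped when $y=c$); the zigzag condition at $y$ is preserved because $c\in[x,y]$, and $\cont(Q)\subseteq\cont(P)$. If $y,c$ are incomparable, take $Q=\ldots,y,a',c$ with $a'=\inf(y,c)\geq x$; the new content lies in $[a',c]\subseteq U$ since $a'\geq x$. If $y<c$, drop $x$ to obtain the shorter zigzag $P'=\ldots,y$ ending at the peak $y$ and apply the peak case above to $P'$ with target $c>y$; the bound $U'=\{b\leq c\mid b\not<y\}$ produced is contained in $U$, since $b<x$ forces $b<y$.

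The main technical step is the content analysis in the cascade subcase. One first checks that $c$ is actually incomparable with $b_m$: $c<b_m$ would give $x<c<b_m$, hence $x<b_m$, contradicting the incomparability of the peaks $b_m$ and $x$ in a simply connected lattice. The content bound then uses Proposition~\ref{P:simply_con}: when $b_m$ is at least two peaks back from $x$, the sub-zigzag from the sink $a_{m+1}$ immediately after $b_m$ to $x$ has length at least $3$, so $a_{m+1}$ and $x$ are incomparable endpoints; for $Q=(a_0,c)$, one instead uses the incomparability of the endpoints $a_0$ and $x$ of $P$ itself. Any $z$ in the newly introduced interval $(a'_{m+1},c)$ (or $(a_0,c)$) with $z\leq x$ would give $a_{m+1}\leq z\leq x$, forcing $a_{m+1}\leq x$, contradicting incomparability. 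Thus no new element of $Q$ lies strictly below $x$, establishing $\cont(Q)\setminus\cont(P)\subseteq U$.
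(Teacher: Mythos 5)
Your proof is correct, but it is organized quite differently from the paper's and is considerably more casework-heavy. The paper's proof is a two-case argument along the zigzag: either $a\leq c$ (then $Q=a,c$, and $\cont(Q)\setminus\cont(P)\neq\emptyset$ forces $a<x$, which by Proposition~\ref{P:simply_con} forces $P=a,x$, after which the bound is immediate), or there is some index $i$ with $x_i\not\leq c$ and $x_{i+1}\leq c$; at such a transition one applies (the dual of) Lemma~\ref{L:davorsetzen} to truncate $P$ and attach $c$, yielding either $x_0\ldots x_i,c$ or a weak zigzag $x_0\ldots x_{i+1},c$ that refines. In the second subcase, the only threat to the content bound is an element of $[x_{i+1},c]$ below $x$; this would give $x_{i+1}<x$, and Proposition~\ref{P:simply_con} then forces $i+2=n$, after which $[x_{i+1},c]\setminus[x_{i+1},x]\subset U$. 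That is the whole argument.

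Your version arrives at the same place but decomposes the problem by whether $x$ is a peak or a sink, recurses from the sink case into the peak case by dropping $x$, and then in the peak case cascades backwards through the peaks of $P$ to locate the latest one incomparable with $c$. Functionally this locates a transition index just as the paper does, and you invoke exactly the same tools (Lemma~\ref{L:davorsetzen} for the insertion of $c$, Proposition~\ref{P:simply_con} for the incomparability of non-adjacent zigzag entries, and the refinement mechanism for weak zigzags); your content estimates are also sound, including the crucial observation that $a_{m+1}\leq\inf(b_m,c)$ so that the newly introduced interval $[a'_{m+1},c]$ cannot contain anything strictly below $x$. The one small imprecision is that in the final step you argue about $z\leq x$ where the relevant condition is $z<x$; this is harmless since $z<x$ is what actually needs excluding, and your inequality chain works for strict inequality. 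Overall: same ideas, more cases. You would save work by abandoning the peak/sink split on $x$ and the backward cascade, and instead directly locating any index $i$ with $x_i\not\leq c$ and $x_{i+1}\leq c$ as the paper does; the peak/sink distinction then never needs to be mentioned.
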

\begin{proof}
  Let $P$ be the zigzag $a=x_0\ldots x_n=x$. There are two cases.
  \begin{enumerate}
  \item $a\leq c$. Then we set $Q=a,c$. Assume $\cont(Q)\not\subset
    U$. Then $b<x$ for some $b\in\cont(Q)=[a,c]$. This implies $a<x$.
    So this can only happen if $P=a,x$, in which case
    $[a,c]\subset[a,x]\cup U$.
  \item There is $i$ such that $x_i\not\leq c$ and $x_{i+1}\leq c$. By
    Lemma \ref{L:davorsetzen} there are two cases.
    \begin{enumerate}
    \item $c<x_i$. Then $Q=x_0\ldots x_i,c$ is a zigzag. Then
      $\cont(Q)\setminus\cont(P)\subset[x_i,c]$. If $[x_i,c]$ is not a
      subset of $U$, we have $x_i<x$, which implies $x_i<c$, a
      contradiction.
    \item $Q'=x_0\ldots x_{i+1},c$ is a weak zigzag. Then
      $\cont(Q')\setminus\cont(P)\subset[x_{i+1},c]$. If $[x_{i+1},c]$
      is not a subset of $U$, we have $x_{i+1}<x$, which implies
      $i+2=n$. In this case
      \[\cont(Q')\setminus\cont(P)\subset[x_{i+1},c]\setminus
             [x_{i+1},x]\subset U\].
    \end{enumerate}
  \end{enumerate}
\end{proof}
\begin{corollary}\label{C:ind_cl}
  $A$ is independent from $C$ over $B$ if and only if $\cl(AB)$ is
  independent from $\cl(BC)$ over $\cl(B)$.
\end{corollary}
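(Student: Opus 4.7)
The $(\Leftarrow)$ direction is immediate: every zigzag between an element of $A$ and an element of $C$ that fails to cross $\cl(B)$ is also a zigzag between elements of $\cl(AB)$ and $\cl(BC)$, and since $\cl(\cl(B)) = \cl(B)$ it still fails to cross $\cl(B)$. So $\cl(AB)\IND_{\cl(B)}\cl(BC)$ implies $A\IND_B C$.

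For $(\Rightarrow)$, assume $A\IND_B C$. All three sets $\cl(AB)$, $\cl(BC)$, $\cl(B)$ are closed and $\cl(B)$ is contained in both $\cl(AB)$ and $\cl(BC)$, so by Proposition \ref{P:ind_closed} it suffices to verify (a) \emph{freeness}: whenever $u\in\cl(AB)$, $v\in\cl(BC)$, and $u\leq v$, some $w\in\cl(B)$ satisfies $u\leq w\leq v$; and (b) \emph{closedness}: $\cl(AB)\cup\cl(BC)$ is closed. My plan for both is a common zigzag-surgery. Use Corollary \ref{C:cl_zz} to realize the given $u\in\cl(AB)$ on a zigzag $P_u$ with endpoints in $A\cup B$, and $v\in\cl(BC)$ on a zigzag $P_v$ with endpoints in $B\cup C$. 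Splice $P_u$, the given zigzag $Z$ (in case (b)) or the trivial length-one zigzag $u,v$ (in case (a)), and $P_v$ into a single zigzag $\tilde Z$ running from some $a\in A\cup B$ to some $e\in B\cup C$. The surgery uses the splicing lemma preceding Corollary \ref{C:cl_zz}, the extension lemma immediately before this corollary (which bounds the content added when a zigzag is pushed past its endpoint to a larger element), and Corollary \ref{C:weak_zigzag} to refine any weak zigzag that arises. If $a$ or $e$ itself lies in $B$, the required crossing of $\cl(B)$ is immediate; otherwise $\tilde Z$ is a zigzag from $A$ to $C$ and $A\IND_B C$ supplies a crossing point $w\in\cl(B)$ on $\tilde Z$.

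For (b), splitting $\tilde Z$ at $w\in\cl(B)\subseteq\cl(AB)\cap\cl(BC)$ gives a prefix that is a zigzag between two elements of the closed set $\cl(AB)$ and therefore lies in $\cl(AB)$, and symmetrically a suffix lying in $\cl(BC)$; the original $Z\subseteq\tilde Z$ thus lies in $\cl(AB)\cup\cl(BC)$. For (a), the content constraint $\cont(Q)\setminus\cont(P)\subset\{b:b\leq v,\,b\not<u\}$ furnished by the extension lemma (applied at both endpoints) confines any crossing $w$ that falls in the sub-zigzag from $u$ to $v$ in $\tilde Z$, and Proposition \ref{P:simply_con} then traps $w$ in $[\inf(u,v),\sup(u,v)]=[u,v]$; a short case analysis handles crossings apparently in the added prefix or suffix by relocating them into $[u,v]$ or invoking a variant surgery. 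The main obstacle is respecting zigzag alternation at the gluing points: $u$ may occur as either a sink or a peak in the witness zigzag $P_u$, and only one orientation permits direct concatenation with the next segment. This is handled by choosing the appropriate half of $P_u$ at $u$ (and dually for $P_v$), or by producing a weak zigzag and refining via Corollary \ref{C:weak_zigzag}. The degenerate case $u=v$ of (a), where we need $u\in\cl(B)$, requires an additional argument---one splices through $u$ to a zigzag from $A$ to $C$, and must argue (by iterating the surgery with alternative witness zigzags) that the crossing of $\cl(B)$ produced by $A\IND_B C$ can be arranged at $u$ itself.
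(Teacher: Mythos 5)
The paper proves the hard direction very differently: after reducing to the case of closed $B$, it does \emph{not} go through Proposition~\ref{P:ind_closed}. Instead, using Corollary~\ref{C:cl_zz}, it reduces everything to the one-step statement ``if $a\IND_B c$ then $a\IND_B x$ for every $x$ on every direct path from $c$ to $B$,'' proved by induction on path length; the key case ($x$ adjacent to $c$) follows from a single application of the content lemma together with Lemma~\ref{L:davorsetzen}. Your plan attacks the problem globally, trying to verify freeness and closedness of $\cl(AB)\cup\cl(BC)$ over $\cl(B)$ by splicing together witness zigzags $P_u$, the given $Z$, and $P_v$. This is a legitimate alternative idea, but as written it is not a proof: it leaves precisely the hard parts open.

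Concretely, three things are unresolved. First, the degenerate freeness case $u=v$, which amounts to $\cl(AB)\cap\cl(BC)\subseteq\cl(B)$, is explicitly postponed; it is not a corner case but the heart of the matter, and ``iterating the surgery with alternative witness zigzags'' gives no argument for why the crossing can be forced to land at $u$. Second, for freeness with $u<v$, you acknowledge the crossing point $w$ of $\tilde Z$ may lie in the $P_u$ or $P_v$ portion rather than between $u$ and $v$; ``a short case analysis \dots\ or invoking a variant surgery'' is hand-waving at exactly the point where a careful argument is needed (this is where the paper's inductive set-up avoids the problem entirely). Third, the claim ``$Z\subseteq\tilde Z$'' in the closedness part is not quite right: splicing via Lemma~\ref{L:splice} plus Corollary~\ref{C:weak_zigzag} refines the sequence near the gluing vertices, so $u$ and $v$ themselves may be replaced by nearby elements; this must be tracked, and if $u$ gets replaced you have not shown that the original $u$ (and hence the original $Z$) lands in $\cl(AB)\cup\cl(BC)$. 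The paper's route — show the independence condition \ref{P:ind_closed:ind}) directly via the one-step inductive lemma, and then read off \ref{P:ind_closed:free}) as a \emph{consequence} of Proposition~\ref{P:ind_closed} — sidesteps all three difficulties, so if you want to keep your route you need to actually carry out the deferred case analyses, and I expect you will find yourself rediscovering the paper's content-lemma argument inside them.
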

\begin{proof}
  It is clear from the definitions that we may assume that $B$ is
  closed, and that independence is antitone in $A$ and $C$. So, by
  Corollary \ref{C:cl_zz} is suffices to prove the following:
  \emph{Let $a$ be independent over $B$ from $c$. Then $a$ is
    independent over $B$ from every element $x$ on every direct path
    from $c$ to $B$.} Using induction on the length of the path, we
  may assume that $x$ is the second element of the direct path
  $c,x\ldots b$, and $x\not=b$. We want to show that any zigzag $P$
  from $a$ to $x$ crosses $B$. By duality we may assume that $x<c$.
  Choose a zigzag $Q$ from $a$ to $c$ as in the Lemma. Then $\cont(Q)$
  contains some $b'\in B$. We show that $b'$ is also contained in
  $\cont(P)$. If not, $b'$ would be in $U$ and different from $x$.
  Lemma \ref{L:davorsetzen} yields two cases: either $b',x\ldots b$ is
  a zigzag, or $b',c,x\ldots b$ is a weak zigzag. In either case we
  conclude that $x\in B$, a contradiction.
\end{proof}
\begin{lemma}
  The $\gate(X/A)$ of a finite set $X$ over a closed set $A$ is
  finite.
\end{lemma}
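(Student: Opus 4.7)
The plan is to reduce to the previous lemma, which states that $\cl(AX)\setminus A$ is finite whenever $X$ is finite. The key observation is that every direct path $x=x_0,x_1,\ldots,x_n$ from an element $x\in X$ to $A$ lies entirely inside $\cl(AX)$: the path is a zigzag whose endpoints $x$ and $x_n$ both belong to the closed set $\cl(AX)$, so by closedness all $x_i$ do as well. In particular, if $n\geq 1$, then the penultimate element $x_{n-1}$ lies in $\cl(AX)\setminus A$; if $n=0$, then $x=x_n\in X\cap A$.

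Next I would argue that the endpoint $x_n$ is determined by the penultimate element $x_{n-1}$, via the identities
\[x_n=\ba_{x_{n-1},A}\quad\text{or}\quad x_n=\ab_{x_{n-1},A},\]
depending on whether $x_{n-1}<x_n$ or $x_{n-1}>x_n$. Indeed, assume $x_{n-1}<x_n$: any $y\in A$ with $x_{n-1}\leq y<x_n$ satisfies $y\neq x_{n-1}$ (since $x_{n-1}\notin A$), so $y\in(x_{n-1},x_n)\cap A$, contradicting the directness condition. Hence $x_n$ is a minimal element of $\{y\in A\mid y\geq x_{n-1}\}$. Uniqueness of this minimum (so that $\ba_{x_{n-1},A}$ is well defined and equals $x_n$) follows because closed sets are sublattices: two minima $y_1,y_2$ would yield $\inf(y_1,y_2)\in A$ lying between $x_{n-1}$ and both $y_i$, forcing $y_1=y_2$. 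The case $x_{n-1}>x_n$ is dual.

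Combining these two steps gives the inclusion
\[\gate(X/A)\;\subseteq\;(X\cap A)\cup\{\ab_{y,A}\mid y\in\cl(AX)\setminus A\}\cup\{\ba_{y,A}\mid y\in\cl(AX)\setminus A\},\]
and the right-hand side is finite by the preceding lemma together with the finiteness of $X$. I do not anticipate a real obstacle here; the substantive input is the previous lemma, and the only thing to verify carefully is that the directness requirement, combined with the sublattice property of closed sets, pins down the endpoint uniquely from the penultimate vertex.
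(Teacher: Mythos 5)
Your proof is correct and follows essentially the same route as the paper: reduce to the finiteness of $\cl(XA)\setminus A$ (the preceding lemma), observe that a direct path from $X$ to $A$ lies entirely in $\cl(XA)$ by closedness, and conclude that the endpoint is one of $\ab_{x_{n-1},A}$ or $\ba_{x_{n-1},A}$ with $x_{n-1}\in\cl(XA)\setminus A$. You spell out slightly more detail than the paper (the length-$0$ paths contributing $X\cap A$, and the well-definedness of $\ab$ and $\ba$, which the paper already established earlier when introducing that notation), but the argument is the same.
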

\begin{proof}
  Let $B$ be the closure of $X\cup A$ and $x_0\ldots x_n$ a direct
  path from some $x_0\in X\setminus A$ to $A$. Then
  $x_n\in\{a_{x_{n-1},A},b_{x_{n-1},A}\}$ It follows that
  \[\#\gate(X/A)\leq 2\cdot\#(B\setminus A).\]
\end{proof}
\noindent We will give a different bound in Proposition
\ref{P:gates_flags}
\begin{theorem}\label{T:forking}
  The theory of free $N$-pseudospaces is $\omega$-stable. Three
  subsets $A$, $B$, $C$ of $M$ are forking-independent if and only if
  they are independent in the sense of Definition~\ref{D:independent}.
\end{theorem}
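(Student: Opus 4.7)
The plan is to first establish $\omega$-stability by a direct type-counting argument, and then characterize forking by verifying that $\IND$ from Definition~\ref{D:independent} satisfies the axioms that uniquely identify non-forking in a stable theory: invariance, symmetry, monotonicity, finite character, local character, existence/extension, and stationarity over closed sets.

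For $\omega$-stability, let $A$ be a countable parameter set. By Corollary~\ref{C:cl_zz}, $\cl(A)$ is countable (zigzags are finite sequences between elements of $A$). For each $x \in M$, Proposition~\ref{P:endlicher_abschluss} yields a finite closed extension $B_x \supseteq \cl(A)$ containing $x$, and Corollary~\ref{C:closed_type} tells us that $\tp(x/\cl(A))$ is determined by the isomorphism type of $B_x$ over $\cl(A)$. There are only countably many such isomorphism types, as each $B_x \setminus \cl(A)$ is built by a finite sequence of simple extensions, each specified by a triple $(a,b,s)$ ranging over a countable set; hence $|S(\cl(A))| \leq \aleph_0$.

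For the characterization of forking, invariance and symmetry are immediate (reversing a zigzag yields a zigzag), monotonicity in $A$ and $C$ is built into the definition, and finite character holds because zigzags are finite objects. For local character with $A$ finite, the gate $\gate(A/\cl(B))$ is finite by the lemma preceding Corollary~\ref{C:ind_cl}, and by Corollary~\ref{C:cl_zz} each gate element lies on a zigzag between finitely many elements of $B$; collecting these produces a finite $B_0 \subseteq B$ with $\gate(A/\cl(B)) \subseteq \cl(B_0)$, and Proposition~\ref{P:ind_closed} then gives $A \IND_{B_0} B$. Existence and stationarity both rest on the free amalgam: given closed $B \subseteq \cl(A) \cap \cl(C)$, Lemma~\ref{L:free_amalgam}(\ref{L:free_almalgam:connected}) shows that $\cl(A) \otimes_B \cl(C)$ is a simply connected $N$-geometry in which $\cl(A)$ and $\cl(C)$ are closed; iteratively applying Corollary~\ref{C:back} (and Corollary~\ref{C:ind_cl} to reduce to closures) one realises a copy $A'$ of $\cl(A)$ inside the $\omega$-saturated monster over $\cl(C)$, and Proposition~\ref{P:ind_closed} certifies $A' \IND_B C$. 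For stationarity, if $A,A' \IND_B C$ and $A \equiv_B A'$, then Proposition~\ref{P:ind_closed} identifies both $\cl(A \cup C)$ and $\cl(A' \cup C)$ with the same free amalgam $\cl(A) \otimes_B \cl(C)$, and the isomorphism over $\cl(C)$ extending the given one is elementary by Corollary~\ref{C:closed_type}.

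The main technical hurdle is stationarity, which demands an honest identification of $\cl(A \cup C)$ with the abstract free amalgam whenever $A \IND_B C$: clauses (i) and (ii) of Proposition~\ref{P:ind_closed}(\ref{P:ind_closed:free}) supply the order-theoretic freeness and closedness of the union, but one must additionally match the lattice operations on mixed pairs with those computed in $\cl(A) \otimes_B \cl(C)$, which is precisely what Lemma~\ref{L:free_amalgam}(\ref{L:free_almalgam:connected}) establishes. With all the listed properties in hand, $\IND$ must coincide with non-forking by the standard uniqueness of stable independence relations.
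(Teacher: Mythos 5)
Your strategy is recognisably a variant of the paper's: verify a list of axioms that uniquely pin down non-forking, with the technical load carried by the same lemmas — gate finiteness for local character, Lemma~\ref{L:free_amalgam} and Corollary~\ref{C:back} for existence, and the identification of $\cl(A\cup C)$ with the free amalgam (via Proposition~\ref{P:ind_closed}) together with Corollary~\ref{C:closed_type} for uniqueness of the free extension. The differences are genuine but modest. First, you prove $\omega$-stability by a direct count of types over a countable closed base using Proposition~\ref{P:endlicher_abschluss} and Corollary~\ref{C:closed_type}; the paper explicitly acknowledges this as the quick route, but instead folds stability into the axiom list via a \emph{weak boundedness} clause (Theorem~8.5.10 of \cite{TZ12}), and your computation is exactly the footnoted remark that the bound there equals $1$. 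Second, you replace the paper's pair (weak boundedness, transitivity) by (stationarity over closed sets, symmetry); symmetry is indeed immediate from the definition, and your stationarity argument is correct and is essentially the paper's boundedness argument in the strong form.

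There is, however, one real omission: you never verify \emph{transitivity}. The characterisation theorems for non-forking in a stable theory (including the one your list implicitly appeals to) all require transitivity, and it does not follow for free from symmetry, extension and stationarity — the standard way to see that a proposed relation equals non-forking is to show the $\IND$-free global extension is base-invariant, and that argument uses transitivity. Fortunately, in this setting transitivity is immediate from Proposition~\ref{P:ind_closed}\ref{P:ind_closed:gate}): if $B\subset C\subset D$ are closed, $A\IND_BC$ and $A\IND_CD$ translate to $\gate(A/D)\subset C$ and $\gate(A/D)\subset B$ (the former because a direct path from $A$ to $D$ gives a direct path to $C$ with the same gate, the latter by the hypothesis applied to $C$), whence $A\IND_BD$ — which is precisely how the paper handles it. Adding this one line closes the gap; everything else in your proof is sound and matches the paper's technical core.
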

\begin{proof}
  Since both notions of independence hold for $X$, $B$, $C$ if and
  only if they hold for all $X_0$, $B$, $C$, where $X_0$ is a finite
  subset of $A$, it is enough to prove the theorem for finite $X$.
  Also both notions hold for $X$, $B$, $C$ if and only if they hold
  for $X$, $B$, $BC$. This means that we may assume that $B\subset C$.
  Now, it remains to check the six properties of
  \cite[Theorem~8.5.10]{TZ12}.
  \begin{enumerate}[a)]
  \item (Invariance) Independence is invariant under automorphisms of
    $M$: This is clear.
  \item (Local Character) We will show that for every finite $X$ and
    every $C$ there is a finite $C_0\subset C$ such that $X
    \IND_{C_0}C$: Choose $C_0$ big enough such that $\cl(C_0)$
    contains the (finite) gate of $X$ over $\cl(C)$. Then by
    Proposition \ref{P:ind_closed} $X \IND_{\cl(C_0)}\cl(C)$. This
    implies\footnote{By the trivial direction of Corollary
      \ref{C:ind_cl}.} $X \IND_{C_0}C$.
  \item (Weak boundedness) Consider a finite set $X$ and an extension
    $B\subset C$. We will show that the number of extensions
    $\tp(X'/C)$ of $\tp(X/B)$ with $X' \IND_BC$ is bounded by the
    number of extensions\footnote{It follows from Corollary
      \ref{C:acldcl} below that this number is $1$.} of $\tp(X/B)$ to
    $\cl(B)$. Consider two finite sets $X'$ and $X''$ which have the
    same type over $\cl(B)$ and which are both independent from $C$
    over $B$. Then $\cl(X'B)$ and $\cl(X''B)$ have the same type over
    $\cl(B)$ and are independent from $C$ over $\cl(B)$ by
    Corollary\ \ref{C:ind_cl}. By Lemma \ref{L:free_amalgam} the two
    structures $\cl(X'B)\cup\cl(C)$ and $\cl(X''B)\cup\cl(C)$ are
    isomorphic over $\cl(C)$. Since they are closed in $M$, they have
    the same type.
  \item\label{existence} (Existence) Let $X$, and $B\subset C$ be
    given. Set $D=\cl(XB)$. With Lemma \ref{L:free_amalgam} choose a
    structure $D'\otimes_{\cl(B)}\cl(C)$ such that $D'$ is isomorphic
    to $D$ over $\cl(B)$. Now $D'\otimes_{\cl(B)}\cl(C)$ is simply
    connected, and $\cl(C)$ is closed in $D'$. By Corollary
    \ref{C:back} we may assume that $D'\subset M$. Write $D'$ as
    $\cl(X'B)$. Then $X'$ has the same type as $X$ over $B$ and is
    independent from $C$ over $B$ by Proposition \ref{P:ind_closed}.
    \item (Transitivity) Let $X$ be given and $B\subset C\subset D$ be
      given. Assume $X\IND_BC$ and $X\IND_CD$. Then $X\IND_B\cl(C)$
      and and $X\IND_C\cl(D)$ by Corollary \ref{C:ind_cl}. By
      Proposition \ref{P:ind_closed} this means
      $\gate(X/\cl(B))=\gate(X/\cl(C))=\gate(X/\cl(D))$. Whence $X$ is
      independent from $D$ over $B$.
    \item (Weak Monotony) If $A$ is independent from $C$ over $B$ and
      $C'$ is a subset of $C$, then $A$ is independent from $C'$ over
      $B$. This is clear from the definition.
  \end{enumerate}
\end{proof}

\section{Gates}
\noindent In this section we work, as before in a big saturated free
$N$-pseudospace $M$.

\begin{proposition}[cp.\ \cite{kT14}, Prop.\ 2.28, 2.30]
  $\cl(X)$ is the algebraic closure of~$X$.
\end{proposition}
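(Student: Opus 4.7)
The plan is to prove the two inclusions $\cl(X)\subseteq\acl(X)$ and $\acl(X)\subseteq\cl(X)$ separately. The first is a quick consequence of results from Sections 2--3, while the second calls on the forking machinery set up in Theorem~\ref{T:forking}.

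For $\cl(X)\subseteq\acl(X)$, take $y\in\cl(X)$. If $y\in\{\Null,\Eins\}$, then $y\in\acl(\emptyset)$, since $\Null$ and $\Eins$ are the unique least and greatest elements. Otherwise Corollary~\ref{C:cl_zz} places $y$ on some zigzag between two elements $a,b$ of $X$, so $y\in\cl(\{a,b\})$. Applying the earlier lemma (``$\cl(AX')\setminus A$ is finite for $A$ closed and $X'$ finite'') to $A=\{\Null,\Eins\}$ and $X'=\{a,b\}$, we see that $\cl(\{a,b\})$ is finite. Since the closure operator is defined purely in terms of the lattice order (via zigzags), any automorphism of $M$ fixing $a$ and $b$ maps $\cl(\{a,b\})$ to itself, so the orbit of $y$ under the stabiliser of $\{a,b\}$ is confined to this finite set. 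Hence $y\in\acl(\{a,b\})\subseteq\acl(X)$.

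For $\acl(X)\subseteq\cl(X)$, put $E=\cl(X)$ and suppose $y\notin E$. I would inductively build a sequence $y_0=y,y_1,y_2,\ldots$ of pairwise distinct realisations of $\tp(y/E)$. Given $y_0,\ldots,y_n$ all realising this type, apply the (Existence) property established in the proof of Theorem~\ref{T:forking} with $\{y\}$ in the role of $X$, $E$ in the role of $B$, and $E\cup\{y_0,\ldots,y_n\}$ in the role of $C$. This delivers $y_{n+1}$ with $\tp(y_{n+1}/E)=\tp(y/E)$ and $\{y_{n+1}\}\IND_E\{y_0,\ldots,y_n\}$, so in particular $y_{n+1}\notin E$. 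The distinctness $y_{n+1}\ne y_i$ for $i\le n$ follows from Corollary~\ref{C:ind_cl}, which upgrades this independence to $\cl(E\cup\{y_{n+1}\})\IND_E\cl(E\cup\{y_0,\ldots,y_n\})$, combined with the ``free'' clause of Proposition~\ref{P:ind_closed}\,c): if $y_{n+1}=y_i$, then $y_{n+1}$ lies in both closed sets, and taking $a=c=y_{n+1}$ in the definition of freeness produces $b\in E$ with $y_{n+1}\le b\le y_{n+1}$, forcing $y_{n+1}=b\in E$, a contradiction. Thus infinitely many distinct conjugates of $y$ over $X$ exist, so $y\notin\acl(X)$.

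The main obstacle is the reverse inclusion, and within it the only slightly delicate point is the distinctness step $y_{n+1}\ne y_i$. Routing it through the freeness clause of Proposition~\ref{P:ind_closed} (rather than arguing directly from the definition of independence via degenerate zigzags) is what makes the argument clean.
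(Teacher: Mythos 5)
Your proof is correct and follows essentially the same route as the paper's: one direction via finiteness of closures and invariance, the other via the Existence clause established in the proof of Theorem~\ref{T:forking}. The only cosmetic difference is that the paper takes $C=\acl(B)$ in the Existence clause and concludes in one step (using that the realization produced lies in a free amalgam $D'\otimes_{\cl(B)}\cl(C)$, hence outside $\cl(C)$), whereas you iterate Existence to manufacture an infinite sequence of pairwise distinct conjugates and verify distinctness explicitly through the freeness clause of Proposition~\ref{P:ind_closed}; your version is a bit more verbose but also makes explicit a point the paper leaves implicit, namely why the realization furnished by Existence cannot already lie in $C$.
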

\begin{proof}
  If $X$ is finite, its closure is finite by Proposition
  \ref{P:endlicher_abschluss}, so contained in the algebraic closure
  of $X$. The closure of an arbitrary set $X$ is the union of the
  closures of its finite subsets, and so also contained in $\acl(X)$.
  Let $B$ be closed and $x\not\in B$. We will show that $x$ is not
  algebraic over $B$. Let $C$ be an arbitrary (small) extension of
  $B$. The existence part of the proof Theorem \ref{T:forking} yields
  a realization of $\tp(x/B)$ which is not an element of $C$. Taking
  $C=\acl(B)$, we see that $x$ is not algebraic over $B$.
\end{proof}
\begin{lemma}
  Let $A$ be a closed set and $z\in M\setminus A$. Then
  \[\etag(z/A)=\{x_1\mid z,x_1\ldots x_n \text{ a direct path from
    $x$ to $A$}\}\] is a \emph{flag}, i.e.\ a linearly ordered subset
  of $M$.
\end{lemma}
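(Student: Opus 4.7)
The plan is to show any two elements $x_1, y_1$ of $\etag(z/A)$ are comparable. Each such $x_1$ appears adjacent to $z$ in an alternating sequence, hence $x_1$ and $z$ are comparable. If $x_1$ and $y_1$ lie on opposite sides of $z$, they are comparable via $z$. The only nontrivial case, by duality, is when both $x_1, y_1 > z$; fix direct paths $z, x_1, x_2, \ldots, x_n$ and $z, y_1, y_2, \ldots, y_m$ with $x_n, y_m \in A$. I proceed by contradiction, assuming $x_1$ and $y_1$ are incomparable.

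Set $t = \inf(x_1, y_1)$, so $z \leq t < x_1, y_1$ (strictness from incomparability). Form the composed alternating sequence
\[x_n, x_{n-1}, \ldots, x_1, t, y_1, y_2, \ldots, y_m,\]
treating $t$ as a sink between the peaks $x_1$ and $y_1$. The key step is to verify this is a zigzag. Alternation is immediate and $t = \inf(x_1, y_1)$ is tautological. At $x_1$ (when $n \geq 2$) we need $x_1 = \sup(x_2, t)$: from the original zigzag we have $x_1 = \sup(z, x_2)$, and since $z \leq t \leq x_1$, this upgrades to $x_1 = \sup(x_2, t)$. The dual statement holds at $y_1$ when $m \geq 2$. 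Non-degeneracy $t \neq x_1, y_1$ follows from incomparability, and the remaining zigzag conditions are inherited from the original paths.

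Now the composed zigzag has endpoints $x_n, y_m \in A$, so closedness of $A$ forces every element into $A$. If $n \geq 2$ this contradicts $x_1 \notin A$ (directness of the path), and likewise if $m \geq 2$. The only remaining case $n = m = 1$ forces $x_1 = y_1 = \ba_{z,A}$, the unique upper direct neighbor of $z$ in $A$, contradicting incomparability. Hence $x_1$ and $y_1$ are comparable in all cases and $\etag(z/A)$ is a flag. The main obstacle is the junction verification, which hinges on the elementary but essential observation that inserting $t$ between the two paths preserves the sup condition thanks to $z \leq t \leq x_1$ (and the dual bound at $y_1$).
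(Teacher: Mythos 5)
Your proof is correct and follows essentially the same route as the paper: reduce to the case where $x_1$ and $y_1$ lie on the same side of $z$, splice the two direct paths into a single zigzag, and use closedness of $A$ to force $x_1,y_1\in A$, hence $m=n=1$ and $x_1=y_1$. The only cosmetic difference is that the paper invokes Lemma~\ref{L:splice} to get the weak zigzag $y_n\ldots y_1,z,x_1\ldots x_m$ and then (implicitly) refines at $z$, while you insert $t=\inf(x_1,y_1)$ directly and verify the sup/inf conditions at the junction by hand; the two come to the same thing.
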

\begin{proof}
  Consider two direct paths from $z$ to $A$: $z,x_1\ldots x_m$ and
  $z,y_1\ldots y_n$. We want to show that $x_1$ and $y_1$ are
  comparable. So we may assume that $z$ is a peak (or a sink) in both
  paths. If $x_1$ and $y_1$ were not comparable, $y_n\ldots
  y_1,z,x_1\ldots x_m$ is a weak zigzag by Lemma \ref{L:splice}. It
  follows that $x_1$ and $y_1$ are in $A$. So $m=n=1$ and
  $x_1=y_1=\ab_{x,A}$. A contradiction.
\end{proof}
\begin{corollary}[cp.\ \cite{kT14}, Cor.\ 2.21]
  A direct path $z,x_1\ldots x_n$ from $z$ to $A$ is determined by the
  sequence of $s_1\dots s_{n-1}$ of the layers of the $x_i$.\qed
\end{corollary}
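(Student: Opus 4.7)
The plan is a straightforward induction on $n$, with the preceding flag lemma providing all the content. That lemma says $\etag(z/A)$ is a flag, hence linearly ordered. In an $N$-geometry two distinct elements of the same layer are incomparable (axiom 1 forces $s<t$ whenever $x<y$ with $x\in\A_s$, $y\in\A_t$), so any flag contains at most one element per layer. Hence $s_1$ pins down $x_1$ as the unique element of $\etag(z/A)\cap\A_{s_1}$.

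For the inductive step, I would observe that once $x_1$ is fixed, the tail $x_1,x_2,\ldots,x_n$ is itself a direct path from $x_1$ to $A$: it is a sub-zigzag of the original, the only element lying in $A$ is $x_n$, and $(x_{n-1},x_n)\cap A=\emptyset$ by the directness of the whole path. Applying the induction hypothesis to this shorter direct path, the sequence $x_2,\ldots,x_n$ is determined by $s_2,\ldots,s_{n-1}$, so the original path is determined by $s_1,\ldots,s_{n-1}$.

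The one delicate point is the very end of the path, where $x_n$ is either $\ab_{x_{n-1},A}$ or $\ba_{x_{n-1},A}$ with no further layer to distinguish the two. This is resolved by the zigzag alternation: the direction of the final step is forced to be opposite to that of $x_{n-2}\to x_{n-1}$, which is itself encoded in the relative order of $s_{n-2}$ and $s_{n-1}$ (and, at the start of the path, in the comparison of the layer of $z$ with $s_1$). I do not expect any real obstacle; the corollary is essentially a bookkeeping consequence of the flag property of $\etag(z/A)$.
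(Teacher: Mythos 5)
Your proof is correct and is the natural filling-in of the argument the paper leaves implicit: a flag meets each layer $\A_s$ in at most one point (distinct elements of one layer being incomparable), so $s_1$ determines $x_1$ via the flag lemma, the tail $x_1,\ldots,x_n$ is again a direct path from $x_1$ to $A$ and one recurses, and alternation together with directness pins down the last vertex. Note only that for $n=1$ the stated (empty) layer sequence cannot distinguish $z,\ab_{z,A}$ from $z,\ba_{z,A}$, so the corollary---and your induction base---should be read with $n\geq 2$, or else with the layer of $x_n$ included in the data.
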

Remember that there are only finitely many direct paths from $z$ to
$A$ by Proposition \ref{P:endlicher_abschluss}.
\begin{corollary}\label{C:acldcl}
  $\cl(X)$ is the definable closure of $X$.
\end{corollary}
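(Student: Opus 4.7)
The previous proposition gives $\cl(X)=\acl(X)$, and since $\mathrm{dcl}(X)\subseteq\acl(X)$ always, it suffices to show the reverse inclusion $\cl(X)\subseteq\mathrm{dcl}(X)$. Both operators commute with directed unions, so I may assume $X$ is finite, in which case $\cl(X)$ is finite by Proposition~\ref{P:endlicher_abschluss}. Pick $z\in\cl(X)$ and $\sigma\in\mathrm{Aut}(M/X)$; the task becomes $\sigma(z)=z$. Since the closure operator is canonical, $\sigma$ restricts to a permutation of $\cl(X)$ fixing $X\cup\{\Null,\Eins\}$ pointwise.

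Iterating the recursive construction in the proof of Proposition~\ref{P:endlicher_abschluss}, starting from $\{\Null,\Eins\}$ and successively absorbing the elements of $X$, yields a tree structure on $\cl(X)$ in which the roots are the elements of $X$ and each non-root $v\in\cl(X)\setminus X$ appears as the first intermediate vertex of a direct path from some parent element $p$ to a closed set $A_v\subseteq\cl(X)$; in particular $v\in\etag(p/A_v)$. The parent $p$ is either an element of $X$ or another non-root node produced at a shallower recursive step, so by induction on tree depth $p$ and the elements of $A_v$ all lie in $\mathrm{dcl}(X)$ (the base case $p\in X$ being trivial).

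To deduce $\sigma(v)=v$ for such $v$, I invoke the preceding Lemma: $\etag(p/A_v)$ is a flag, i.e.\ a linearly ordered subset of $M$. Because any two comparable elements of an $N$-geometry lie in distinct layers, this flag contains at most one element per layer. Under the inductive hypothesis that $\sigma$ fixes $p$ and $A_v$ pointwise, $\sigma$ maps $\etag(p/A_v)$ to itself; since $\sigma$ preserves layers, it must fix every element of the flag, in particular $v$. Iterating down the tree gives $\sigma=\mathrm{id}$ on $\cl(X)$, hence $\sigma(z)=z$.

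The main obstacle is the bookkeeping: one has to verify that the recursion can indeed be organised so that every non-root $v$ admits such a parent $p$ and closed witness $A_v$ already in $\mathrm{dcl}(X)$, with the parent relation terminating at $X$. This follows from the structure of the proof of Proposition~\ref{P:endlicher_abschluss}, whose outer targets are the elements of $X$ and whose inner calls successively handle first steps of direct paths. The preceding Corollary on direct paths being determined by their layer sequences is the companion fact guaranteeing that the layers index $\etag(p/A_v)$ and, more generally, that the whole direct-path data is encoded layer by layer, which is exactly what allows layer-preservation of $\sigma$ to force $\sigma(v)=v$.
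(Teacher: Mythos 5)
The paper's proof is a one‑liner: by Corollary~\ref{C:cl_zz} every element of $\cl(X)$ (other than $\Null,\Eins$) lies on a zigzag $x_0\ldots x_n$ with $x_0,x_n\in X$; such a zigzag is a direct path from $x_0$ to $\{\Null,x_n,\Eins\}$, and by the corollary just before the statement a direct path is determined by the sequence of layers of its vertices, hence is definable over $x_0,x_n$. You take a genuinely different and far more circuitous route: you instead invoke the $\etag$ flag lemma together with the recursive construction from Proposition~\ref{P:endlicher_abschluss}, tracing a tree of ``first intermediate vertices'' and using layer‑preservation of automorphisms to pin down each node of a flag. The ideas you deploy are all sound, but your proposed ``induction on tree depth'' does not actually run as stated: the closed witness $A_v$ that accompanies a node $v$ is the closed set at the time $v$ is established in the recursion, and this can contain elements of \emph{strictly greater} tree depth (they were established in an earlier recursive subcall lower in the same chain). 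So a node of depth $1$ may well have a witness $A_v$ containing a depth‑$2$ element. You also implicitly assume that the parent $p$ of $v$ is already verified, but in the chain produced by the recursion the parent is a \emph{later} target, not an earlier established element. To repair this one needs a more careful double induction: induction on establishment order to handle $A_v$, combined with the observation that the witness sets can only shrink as one climbs the parent chain, so that the chain of parents up to $X$ can be fixed one step at a time. You flag this as ``bookkeeping,'' but it is a real gap as written, and it is exactly the kind of complication that Corollary~\ref{C:cl_zz} lets the paper bypass. If you want to keep a flag/automorphism argument, it is still much cleaner to feed it the single zigzag given by Corollary~\ref{C:cl_zz} rather than unwind the constructible‑closure recursion.
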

\begin{proof}
  A zigzag $x_0\ldots x_n$ is a direct path from $x_0$ to
  $\{\Null,x_n,\Eins\}$, and therefore definable from $x_0$ and $x_n$.
\end{proof}
\begin{proposition}[cp.\ \cite{kT14} Prop.\ 2.31, Cor.\ 2.33]
  \label{P:gates_flags}
  Gates of single elements are flags.
\end{proposition}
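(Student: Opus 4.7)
The plan is to show that any two gate elements $\omega_1, \omega_2 \in \gate(z/A)$, witnessed by direct paths $P_1\colon z, x_1, \ldots, x_m = \omega_1$ and $P_2\colon z, y_1, \ldots, y_n = \omega_2$, are comparable, by an induction that reduces to the flag property of $\etag(z/A)$ just established. If $\min(m, n) = 1$, then one endpoint is $\ab_{z,A}$ or $\ba_{z,A}$ and Lemma~\ref{L:interval} places the other path (hence its endpoint) inside $[\ab_{z,A}, \ba_{z,A}]$, so comparability is immediate. Otherwise $m, n \geq 2$; then by Lemma~\ref{L:interval} together with Proposition~\ref{P:simply_con}, both $\omega_i$ lie strictly inside $(\ab_{z,A}, \ba_{z,A})$ and are incomparable with $z$.

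The preceding $\etag$-lemma makes $x_1$ and $y_1$ comparable. In the symmetric sub-case $x_1 = y_1 =: v$, we have $v \notin A$ (since $m, n \geq 2$) and the tails of $P_1, P_2$ from $v$ are direct paths from $v$ to $A$ of strictly smaller combined length, so the induction hypothesis applied at $v$ yields $\omega_1, \omega_2 \in \gate(v/A)$, hence comparable. In the asymmetric case $x_1 < y_1$ (WLOG), assuming first that both paths have $z$ as a peak, the chain $x_1 \leq y_1 \leq z$ together with the identity $x_1 = \inf(z, x_2)$ from $P_1$'s zigzag condition forces $x_1 = \inf(y_1, x_2)$; consequently $P_1^{\dagger} := y_1, x_1, x_2, \ldots, \omega_1$ is itself a zigzag, and a direct path from $y_1 \notin A$ to $A$ since the directness condition only involves the tail. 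Together with the tail $y_1, y_2, \ldots, \omega_2$ of $P_2$, this places $\omega_1, \omega_2$ in $\gate(y_1/A)$, reducing to a case where the second path is strictly shorter ($n-1$ instead of $n$). The remaining configuration, where $z$ is peak in one path and sink in the other, gives $x_1 \leq z \leq y_1$ automatically, and the $P_1^{\dagger}$-construction still produces a zigzag (now because $y_1 \geq z$ forces $\inf(y_1, x_2) \geq \inf(z, x_2) = x_1$, with equality from $x_1 \leq y_1, x_2$).

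The main obstacle is threading the induction: in the $x_1 < y_1$ step the combined length $m + n$ is preserved rather than decreased (becoming $(m+1) + (n-1)$), so the induction must be on a lexicographic measure such as $(m+n, n)$, or on $|z, A|$ together with $n$, so that the secondary measure $n$ strictly decreases under the $P_1^{\dagger}$-construction. An alternative route I would pursue as a cross-check is a direct cycle argument: if $\omega_1, \omega_2$ were incomparable then $\sup(\omega_1, \omega_2) \in A$ (since $A$ is a closed sublattice), and the concatenation $z, x_1, \ldots, \omega_1, \sup(\omega_1, \omega_2), \omega_2, \ldots, y_1, z$ should assemble into a weak zigzag cycle via Lemma~\ref{L:splice}, refinable to a zigzag cycle by the cyclic analogue of Corollary~\ref{C:weak_zigzag}, contradicting simple connectedness.
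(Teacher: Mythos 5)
The paper's proof of Proposition~\ref{P:gates_flags} proves a strictly stronger claim: for \emph{any} two comparable elements $x_0,y_0$ (not necessarily equal), the endpoints $x_m,y_n$ of direct paths from $x_0,y_0$ to $A$ are comparable. This generalization is the key technical device: it allows an induction on $m+n$ that reduces to the situation where $x_i$ and $y_j$ are incomparable for all $(i,j)\neq(0,0)$, and then $x_m\ldots x_1,x_0,y_0\ldots y_n$ is a weak zigzag that refines to a zigzag with endpoints in $A$, hence lies entirely in $A$, forcing $m=n=1$ and producing an element of $A$ strictly inside $(x_0,x_1)$ — contradicting directness.

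Your main route tries to stay with a single root $z$ and shift it to $y_1$ via $P_1^\dagger$. That works in the sub-case where $z$ is a peak (or sink) in both paths, because then $x_1<y_1\leq z$ gives $\inf(y_1,x_2)\leq\inf(z,x_2)=x_1\leq\inf(y_1,x_2)$. But in the mixed configuration ($x_1<z<y_1$) your argument for $\inf(y_1,x_2)=x_1$ collapses: both displayed inequalities, $\inf(y_1,x_2)\geq\inf(z,x_2)=x_1$ and $x_1\leq\inf(y_1,x_2)$, point in the same direction, so you never rule out $\inf(y_1,x_2)>x_1$ (or even $y_1\geq x_2$). Without that equality, $y_1,x_1,x_2,\ldots$ need not be a zigzag. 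One could try replacing $x_1$ by $x_1':=\inf(y_1,x_2)$, but then $P_1^\dagger$ is a direct path from $y_1$ only if $x_1'\notin A$, and that is precisely the non-trivial point — indeed the paper's refinement step is exactly about whether this replaced element lands in $A$, and the contradiction is derived from the assumption that it does. So the gap is genuine, and it sits at the heart of the argument rather than in bookkeeping.

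The alternative cycle sketch has a similar problem: the proposed cycle $z,x_1,\ldots,\omega_1,\sup(\omega_1,\omega_2),\omega_2,\ldots,y_1,z$ does not even close up into an alternating sequence at $z$ in the mixed configuration (there $y_1,z,x_1$ is a chain), and whether to insert $\sup$ or $\inf$ at the far end depends on $\omega_1,\omega_2$ both being sinks or both peaks, which you have not arranged; the required incomparability of $x_{m-1}$ with $\sup(\omega_1,\omega_2)$ is also left unverified. Your instinct that the statement should follow from simple connectedness is right — that is exactly the engine of the paper's argument — but the correct concatenation is the one the paper uses, built from two direct paths rooted at \emph{different} comparable points $x_0\leq y_0$ once one has reduced modulo the $m+n$ induction, not a cycle through a single root $z$ and a $\sup$ in $A$. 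I would recommend reproving along those lines: generalize the statement to comparable roots, do the $m+n$ induction to kill all comparabilities at positions other than $(0,0)$, and derive the contradiction from the refined zigzag landing in the closed set $A$.
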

\begin{proof}
  Let $A$ be closed. We consider direct paths $x_0\ldots x_m$ and
  $y_0\ldots y_n$ from $x_0$ and $y_0$ to $A$. We claim that $x_m$ and
  $y_n$ are comparable if $x_0$ and $y_0$ are. Using induction on
  $m+n$ we may assume that $x_i$ is incomparable with $y_j$ if
  $(i,j)\not=(0,0)$. We will show that this implies $m=n=0$, for which
  the claim is obvious.

  So assume $m>0$ and $x_0\leq y_0$. Then the sequence $x_m\ldots
  x_1,x_0,y_0\ldots y_n$ is a weak zigzag. This can be refined to a
  zigzag, where only $x_0$ and $y_0$ are changed to $x'_0$ and $y'_0$.
  This new zigzag must be a subset of $A$. It follows that $x_1\in A$,
  and therefore $m=1$. But now $x'_0$ belongs to $A$ and is in
  $(x_1,x_0)$. This is impossible.
\end{proof}
\begin{corollary}
  $\#\gate(x/A)\leq N+3$\qed
\end{corollary}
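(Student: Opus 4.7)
The plan is to deduce the bound directly from the preceding proposition, which states that $\gate(x/A)$ is a flag, that is, a linearly ordered subset of $M$. So I would first invoke that result to know every two elements of $\gate(x/A)$ are comparable in the partial order of $M$.

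Next I would use the defining property of an $N$-geometry: if $y\in\A_s$ and $y'\in\A_t$ with $y<y'$, then $s<t$. In particular two distinct elements of the same layer are always incomparable. Consequently any linearly ordered subset of $M$ — in particular the flag $\gate(x/A)$ — meets each layer $\A_s$ in at most one element.

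Finally I would count layers: the geometry has layers indexed by $-1,0,1,\ldots,N+1$, giving $N+3$ layers in total. Combining with the previous step yields $\#\gate(x/A)\leq N+3$, which is the desired inequality.

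There is essentially no obstacle: the only thing to be careful about is remembering that the layer indices run from $-1$ to $N+1$, so the correct count is $N+3$ and not $N+1$ or $N+2$. All the real work has already been done in Proposition~\ref{P:gates_flags}.
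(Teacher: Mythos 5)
Your argument is correct and is exactly the intended one: the paper marks the corollary with \qed precisely because it follows immediately from Proposition~\ref{P:gates_flags} together with the observation that a flag can meet each of the $N+3$ layers $\A_{-1},\dots,\A_{N+1}$ at most once.
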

\begin{lemma}\label{grenzen_gate}
  Let $A$ be closed and $x=x_0\ldots x_n$ a direct path from $x$ to
  $A$. Then all $\ab_{x_i,A}$ and $\ba_{x_i,A}$ for $i<n$ belong to
  $\gate(x/A)$.
\end{lemma}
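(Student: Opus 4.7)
I would argue by induction on $i$. The base case $i=0$ is immediate: the length-$1$ sequences $x_0, \ab_{x_0, A}$ and $x_0, \ba_{x_0, A}$ are direct paths from $x = x_0$ to $A$ (directness of the last step being guaranteed by the definitions of $\ab$ and $\ba$), so both endpoints belong to $\gate(x/A)$ by definition. For the inductive step fix $0 < i < n$. By the duality between $\ab$ and $\ba$ it suffices to place $\ba_{x_i,A}$ into $\gate(x/A)$; by a further symmetry I treat the case where $x_i$ is a sink of the original path, so $x_{i-1} \geq x_i \leq x_{i+1}$. The peak case will be parallel but with $x_i$ skipped rather than kept (see below).

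The core construction is the alternating sequence
\[ S \;=\; x_0, x_1, \ldots, x_{i-1}, x_i, \ba_{x_i,A}, \]
obtained by gluing the zigzag prefix $x_0, \ldots, x_i$ to the length-$1$ direct path $x_i, \ba_{x_i,A}$; alternation at $x_i$ (now a sink in $S$) holds because both $x_{i-1}$ and $\ba_{x_i,A}$ dominate $x_i$. I split into three cases on the comparability of $x_{i-1}$ with $\ba_{x_i,A}$. If $x_{i-1} \leq \ba_{x_i,A}$, then using $x_{i-1} \geq x_i$ a double application of the definition of $\ba$ forces $\ba_{x_{i-1},A} = \ba_{x_i,A}$, and the inductive hypothesis at $i-1$ supplies the desired direct path. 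If $x_{i-1} \geq \ba_{x_i,A}$, then $\ba_{x_i,A} \in A \cap [x_i, \ab_{x_{i-1},A}]$; starting from a direct path to $\ab_{x_{i-1},A}$ given by induction, I reroute its terminal segment through $x_i$ using Lemma \ref{L:davorsetzen} so that it ends at $\ba_{x_i,A}$, the minimality of $\ba_{x_i,A}$ ensuring directness of the new last step. If $x_{i-1}$ and $\ba_{x_i,A}$ are incomparable, $S$ is a weak zigzag (the further incomparabilities $x_j \parallel \ba_{x_i,A}$ for $j \leq i-2$ either reduce to one of the two preceding comparability cases at a lower index, or follow from simple connectivity via Proposition \ref{P:simply_con}); it refines to a zigzag by Corollary \ref{C:weak_zigzag}, and Remark \ref{r:direct} extracts a direct path from $x_0$ to $A$ whose endpoint must be $\ba_{x_i,A}$ itself by the minimality of $\ba_{x_i,A}$ in $A$ above $x_i$.

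The case where $x_i$ is a peak proceeds in the same spirit but replaces $S$ by $T = x_0, \ldots, x_{i-1}, \ba_{x_i,A}$, skipping $x_i$: since $x_{i-1} \leq x_i \leq \ba_{x_i,A}$ and $x_{i-1}$ was a sink in the original path, $T$ is alternating with $x_{i-1}$ still a sink, and an analogous three-case comparability analysis (now on $x_{i-2}$ and $\ba_{x_i,A}$) concludes. The argument for $\ab_{x_i,A}$ is entirely dual, formed by replacing the tail with the length-$1$ path $x_i, \ab_{x_i,A}$ (or by skipping $x_i$ if $x_i$ is a sink).

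The principal obstacle is twofold: first, verifying the weak-zigzag property of $S$ (resp.\ $T$) in the incomparability case, which rests on the incomparabilities of the original zigzag together with the simply-connected property (Proposition \ref{P:simply_con}(b)); second, ensuring that the refinement of the weak zigzag does not introduce an interior vertex lying in $A$, which would divert the endpoint of the extracted direct path away from $\ba_{x_i,A}$. This second point is controlled by the extremality of $\ba_{x_i,A}$ as an $A$-element above $x_i$ (dually for $\ab_{x_i,A}$), together with the key incomparability $x_{i-2} \not\leq x_i$ supplied by simple connectivity, which rules out the constructed $\inf$/$\sup$ landing in $A$.
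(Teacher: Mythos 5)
Your induction-on-$i$ framework is a reasonable alternative to the paper's argument, and the incomparability subcase of your sink case (refine the weak zigzag $x_0,\dots,x_i,\ba_{x_i,A}$ and extract a direct path) is essentially the paper's construction specialized to $j=i$; that part is sound. Your comparability subcase $x_{i-1}\le\ba_{x_i,A}$ in the sink case is also correct: since $x_{i-1}\geq x_i$, any element of $A$ above $x_{i-1}$ is above $x_i$, so the two $\ba$'s coincide. (You spend effort on the case $x_{i-1}\geq\ba_{x_i,A}$, but it is vacuous: it would put $\ba_{x_i,A}\in(x_i,x_{i-1})\cap A$, and no interval of a direct path meets $A$.)

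The real gap is in the peak case, and you have swept it into ``analogous.'' When $x_i$ is a peak and you compare $x_{i-2}$ with $\ba:=\ba_{x_i,A}$, the comparability subcase $x_{i-2}\le\ba$ does \emph{not} follow from a ``double application of the definition of $\ba$'': you would need $\ba_{x_{i-2},A}\geq x_i$, but $x_{i-2}$ and $x_i$ are incomparable, so the set of $A$-elements above $x_{i-2}$ is not contained in the set of those above $x_i$, and the argument that worked for $x_{i-1}\geq x_i$ does not transfer. The same obstruction ruins your incomparability subcase there: after refining $T$, the new penultimate vertex is $x'_{i-1}=\inf(x_{i-2},\ba)$, which lies above $x_{i-1}$ but \emph{not} above $x_i$, so ``minimality of $\ba$ in $A$ above $x_i$'' no longer shows $(x'_{i-1},\ba)\cap A=\emptyset$. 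To close either subcase you must first establish $x_i<\ba_{x_{i-1},A}$, which is exactly what Lemma~\ref{L:interval} gives when applied to the direct path $x_{i-1},x_i,\dots,x_n$; this in fact shows $\ba_{x_{i-1},A}=\ba_{x_i,A}$ outright in the peak case, making your three-way split on $x_{i-2}$ unnecessary. Lemma~\ref{L:interval} is the load-bearing tool in the paper's proof (it is used twice) and it never appears in your proposal — that omission is where the proof breaks. The paper also avoids induction entirely: with $a=\ab_{x_i,A}$ it takes $j$ to be the \emph{first} index with $a\le x_j$, uses Lemma~\ref{L:interval} to see that $(a,x_j)\cap A=\emptyset$ and to rule out the comparable branch of Lemma~\ref{L:davorsetzen}, and then refines $x_0,\dots,x_j,a$ in one stroke. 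Your version also leaves the $i=1$ peak base case (no $x_{i-2}$) unaddressed.
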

\begin{proof}
  Assume $i<n$. We show that $a=\ab_{x_i,A}$ is in $\gate(x/A)$. Let
  $j\leq i$ be the first index with $a\leq x_j$. We note first that
  $(x_j,a)$ does not intersect $A$. Indeed, if $a<a'<x_j$, Lemma
  \ref{L:interval} gives us $a<a'<x_i$, which contradict the
  definition of $a$. There are two cases:
  \begin{enumerate}
  \item $j=0$. Then $x,a$ is a direct path from $x$ to $A$.
  \item $j>0$. Then we can can apply\footnote{Actually the dual of the
    lemma} Lemma \ref{L:davorsetzen}. The first case of the lemma
    cannot occur, since $x_{j-1}<a$ would imply that $x_i<a$ by Lemma
    \ref{L:interval}. This is impossible. So $x\ldots x_j, a$ is a
    weak zigzag. This refines to a direct path $x\ldots x'_j, b$ from
    $x$ to $A$, since $(x_j,a)$ does not intersect $A$.
  \end{enumerate}
\end{proof}

Let $A\subset B$ be closed subset of $M$, and $B$ finitely
constructible over $A$ via the sequence $v_1\ldots v_k$. Let
$(a_i,b_i,s_i)$ be the type of the extension
\[A(v_1,\ldots,v_i)/A(v_1,\ldots,v_{i-1}).\] We define the
\emph{boundary} of $B/A$ as
\[\boundary(B/A)=\{a_1,b_1,\ldots,a_k,b_k\}\cap A.\]
It will follow from the proof of the next proposition, that
$\boundary(B/A)$ does not depend on the choice of the constructing
sequence $v_1\ldots v_k$.
\begin{proposition}
  Let $A$ be closed and $X$ be finite and disjoint from $A$. Then
  \[\gate(X/A)=\boundary(\cl(XA)/A).\]
\end{proposition}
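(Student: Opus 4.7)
Plan.
I prove both inclusions; the claim that $\boundary$ is independent of the constructing sequence will then follow automatically.

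For $\boundary(\cl(XA)/A)\subseteq\gate(X/A)$: Fix any construction sequence $v_1,\dots,v_k$ and an index $i$ with $a_i\in A$ (the case $b_i\in A$ is dual). Since $A\subseteq B_{i-1}$ and $a_i\in A$, maximality upgrades to $a_i=\ab_{v_i,A}$. The key reduction is to show that every $v\in\cl(XA)\setminus A$ appears as an interior vertex of some direct path from $X$ to $A$; Lemma~\ref{grenzen_gate} then yields $\ab_{v,A},\ba_{v,A}\in\gate(X/A)$. Corollary~\ref{C:cl_zz} puts $v$ on a zigzag $z_0,\dots,z_m$ with endpoints in $X\cup A$. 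Closedness of $A$ forbids $A$-elements on both sides of $v=z_j$: any $z_\alpha,z_\omega\in A$ with $\alpha<j<\omega$ would flank a sub-zigzag lying in $A$, forcing $v\in A$. Reversing the zigzag if needed, I may assume $z_0\in X$ and no $A$-element sits in $z_0,\dots,z_{j-1}$. If an $A$-element lies past $v$, truncation via Remark~\ref{r:direct} delivers a direct path from $z_0$ to $A$ with $v$ interior; otherwise both endpoints are in $X$, and I extend past $z_m$ by appending $\ab_{z_m,A}$ or $\ba_{z_m,A}$ according to whether $z_m$ is a peak or sink, refining via the dual of Lemma~\ref{L:davorsetzen} and Corollary~\ref{C:weak_zigzag} to a genuine zigzag before truncating.

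For $\gate(X/A)\subseteq\boundary(\cl(XA)/A)$: Let $c\in\gate(X/A)$ be witnessed by a direct path $x=x_0,\dots,x_n=c$ with $x\in X$; assume $x_{n-1}<c$ (the other case is dual). Consider
\[W_c=\{y\in\cl(XA)\setminus A \mid y<c \text{ and } (y,c)\cap A=\emptyset\},\]
which contains $x_{n-1}$ and hence is nonempty. Choose $v_{j^*}\in W_c$ with $j^*$ minimal among construction-sequence indices. I claim $\ba_{v_{j^*},B_{j^*-1}}=c$, whence $b_{j^*}=c$ and $c\in\boundary(\cl(XA)/A)$. Since $c\in B_{j^*-1}$ lies above $v_{j^*}$, one has $\ba_{v_{j^*},B_{j^*-1}}\leq c$. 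Any $z\in B_{j^*-1}$ with $v_{j^*}<z<c$ cannot lie in $A$ (as $(v_{j^*},c)\cap A=\emptyset$), so $z=v_j$ for some $j<j^*$; but then $(z,c)\cap A\subseteq(v_{j^*},c)\cap A=\emptyset$ places $z$ into $W_c$, contradicting minimality. For $z\in B_{j^*-1}$ above $v_{j^*}$ and incomparable with $c$, use that $B_{j^*-1}$ is a complete sublattice: $\inf(z,c)\in B_{j^*-1}$ lies in $[v_{j^*},c]$, cannot equal $v_{j^*}\notin B_{j^*-1}$, cannot lie in the open interval $(v_{j^*},c)$ by what was just shown, and must therefore equal $c$, forcing $z\geq c$.

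The main obstacle is the ``both endpoints in $X$'' subcase of the first inclusion: when extending past $z_m$ by an element of $A$, the comparabilities between that new element and $z_{m-1},z_{m-2}$ need not satisfy the weak-zigzag incomparabilities, so some shortening or refinement via Lemma~\ref{L:davorsetzen} is required while preserving $v$ as an interior vertex. The second inclusion, by contrast, is a clean lattice-theoretic minimality argument once the set $W_c$ is isolated.
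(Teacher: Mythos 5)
Your overall strategy mirrors the paper's: identify both $\gate(X/A)$ and $\boundary(\cl(XA)/A)$ with the set $G=\{\ab_{x,A},\ba_{x,A}\mid x\in\cl(XA)\setminus A\}$. Your argument for $\gate(X/A)\subseteq\boundary(\cl(XA)/A)$ via the set $W_c$ and minimality of the construction index is correct and self-contained; the paper reaches the same point by choosing $v_i$ maximal in the partial order inside $[x,\ba_{x,A}]$. Both work.

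The gap you flag in the other inclusion is real and, as stated, unfixable: the claim that every $v\in\cl(XA)\setminus A$ appears as an interior vertex of a direct path from $X$ to $A$ is false. Take $A=\{\Null,\Eins\}$, and let $p,q$ be incomparable elements of some middle layer lying above a common element of $\A_0$, so that $v:=\inf(p,q)\ne\Null$; set $X=\{p,q\}$. Then $v$ lies on the zigzag $p,v,q$, so $v\in\cl(XA)\setminus A$ and $v\notin X$. But $p$ and $q$ are each comparable with every element of $A$, so by Proposition~\ref{P:simply_con}~b) every direct path from $p$ or $q$ to $A$ has length $1$, and none of these passes through $v$. No amount of extending or refining the zigzag $p,v,q$ via Lemma~\ref{L:davorsetzen} and Corollary~\ref{C:weak_zigzag} can get around this --- what you are trying to prove simply does not hold. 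The conclusion you actually need, namely $\ab_{v,A},\ba_{v,A}\in\gate(X/A)$, is still true (here $\ba_{v,A}=\Eins=\ba_{p,A}\in\gate(p/A)$), but it must be reached without ever placing $v$ itself on a direct path from $X$. It is worth noting that the paper's own proof asserts the same intermediate claim at exactly this point without justification, so you have reproduced a genuine lacuna in the published argument rather than overlooked a correct step; closing it requires a different route to $\boundary(\cl(XA)/A)\subseteq\gate(X/A)$.
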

\begin{proof}
  Let $B=\cl(XA)$ be constructed via $v_1\ldots v_k$, and
  $(a_i,b_i,s_i)$ the corresponding types. We will show that both
  sets, $\gate(X/A)$ and $\boundary(B/A)$, are equal to
  \[G=\{\ab_{x,A},\ba_{x,A}\mid x\in B\setminus A\}.\]

  We show first, that $\boundary(B/A)=G$. Consider for example an
  element $b_i$ in the the boundary, i.e.\
  \[b_i=\ba_{v_i,A(v_1,\ldots,v_{i-1})}.\]
  Then $b_i\in A$ implies $b_i=\ba_{v_i,A}\in G$. If conversely
  $b_{x,A}$ is in $G$, let $v_i$ be maximal\footnote{In the sense of
    the partial order of $M$} in $[x,b_{x,A}]$. Then $v_i<b_i\leq
  b_{x,A}$ and maximality implies $b_i\in A$, which again implies
  $b_i=b_{x,A}\in A$ and that $b_i\in\boundary(B/A)$.

  An element of $\gate(X/A)$ is the last element $x_n$ of a direct
  path $x_0\ldots x_n$ from some $x_0\in X$ to $A$. But
  $x_n\in\{\ab_{x_{n-1},A},\ba_{x_{n-1},A}\}$. This shows
  $\gate(X/A)\subset G$. Let conversely $\ba_{x,A}$ be an element of
  $G$. Since $x\in B\setminus A$, we have $x=x_i$ for some direct path
  $x_0\ldots x_n$ from $x_0\in X$ to $A$ and $i<n$. By Lemma
  \ref{grenzen_gate} we have $\ba_{x,A}\in\gate(x_0/A)$.
\end{proof}


\hfill S.A.G

\noindent\parbox[t]{15em}{
Katrin Tent,\\
Mathematisches Institut,\\
Universit\"at M\"unster,\\
Einsteinstrasse 62,\\
D-48149 M\"unster,\\
Germany,\\
{\tt tent@uni-muenster.de}}
\hfill\parbox[t]{18em}{
Martin Ziegler,\\
Mathematisches Institut,\\
Albert-Ludwigs-Universit\"at Freiburg,\\
79104 Freiburg,\\
Germany,\\
{\tt ziegler@uni-freiburg.de}
}

\end{document}